\title[K3 surfaces with two involutions and low Picard number]{K3 surfaces with two involutions and low Picard number}
\author{Dino Festi, Wim Nijgh, Daniel Platt}
\address{Dipartimento di matematica ``Tullio Levi-Civita'', Universit\`a degli studi di Padova, via Trieste 63, 35121, Padova, Italy.}
\email{dino.festi@unipd.it}
\address{Mathematical Institute, Leiden University, Postbus 9512, 2300 RA Leiden, The Netherlands.}
\email{w.nijgh@math.leidenuniv.nl}
\address{Department of Mathematics, King's College London, Strand, London WC2R 2LS, United Kingdom.}
\email{daniel.platt.berlin@gmail.com}
\date{\today}
\begin{document}

\begin{abstract}
    Let $X$ be a complex algebraic K3 surface of degree $2d$ and with Picard number $\rho$.
    Assume that $X$ admits two commuting involutions: one holomorphic and one anti-holomorphic.
    In that case, $\rho \geq 1$ when $d=1$ and $\rho \geq 2$ when $d \geq 2$.
    For $d=1$, the first example defined over $\IQ$ with $\rho=1$ was produced already in 2008 by Elsenhans and Jahnel.
    A K3 surface provided by Kond\={o}, also defined over $\IQ$, can be used to realise the minimum $\rho=2$ for all $d\geq 2$.
    In these notes we construct new explicit examples of K3 surfaces over the rational numbers realising the minimum $\rho=2$ for $d=2,3,4$.
    We also show that a nodal quartic surface can be used to realise the minimum $\rho=2$ for infinitely many different values of $d$.
    Finally, we strengthen a result of Morrison by showing that for any even lattice $N$ of rank $1\leq r \leq 10$ and signature $(1,r-1)$ there exists a K3 surface $Y$ defined over $\IR$ such that $\Pic Y_\IC=\Pic Y \cong N$.
\end{abstract}

\maketitle

\section{Introduction}

One of the central objects of study in algebraic geometry are K3 surfaces, which 
benefit of much interest also from other areas of mathematics.
In differential geometry, one landmark result is Yau's proof of the Calabi conjecture in \cite{Yau1977,Yau1978} and, because of it, complex K3 surfaces are known to admit metrics with holonomy equal to $SU(2)$. Through this connection with holonomy groups, K3~surfaces often appear in the study of higher-dimensional manifolds with special holonomy.

One motivation for this article is the differential geometry of $G_2$-manifolds, i.e. the geometry of manifolds of real dimension $7$ with holonomy equal to $G_2$. 
Here the group $G_2$ denotes the automorphism group of the octonions, see \cite{SW2017}.
Only a few ways of constructing $G_2$-manifolds are currently known, and one such way is the blowup procedure by Joyce and Karigiannis \cite[Section 7.3]{Joyce2017}, 
which uses a complex K3 surface $X$ with two commuting involutions, 
i.e. bijective maps which are their own inverse.
More precisely, one needs a non-symplectic holomorphic involution $\iota: X \rightarrow X$, and an anti-holomorphic involution $\sigma: X \rightarrow X$. 
Another manifold construction by Kovalev and Lee \cite{KL2011} uses a K3 surface with only one (non-symplectic holomorphic) involution.
By using different models of K3 surfaces, such as quartics or double planes ramified over a sextic, one obtains many examples of $G_2$-manifolds.

Not much is known about which manifolds admit metrics with holonomy $G_2$, or how many such metrics there are on a manifold that does.
Because of this, much work in the field is done on constructing and studying examples.
Using explicit examples of K3 surfaces, one obtains good information about the metric on the resulting $G_2$-manifolds.
In the future, this may be helpful for studying possible metric degenerations of $G_2$-manifolds, similar to \cite[Example 7.2]{Joyce2017}), which is realised by two different construction methods, exhibiting two different metric degenerations.
The analogous question for K3 surfaces had likewise first been studied via examples, but by now some general results exist, see the references in Chen, Viaclovsky, and Zhang \cite[Section 1]{Che2020} for an overview.

On the other hand, a program proposed by Donaldson and Thomas~\cite{Donaldson1998} to study the moduli space of $G_2$-metrics suggests counting $G_2$-instantons, i.e. solutions to a certain partial differential equation on principal bundles over the given manifold.
If the manifold with holonomy $G_2$ is obtained via one of the two constructions above, one gets $G_2$-instantons from certain stable bundles over the K3 surface, see Walpuski-Sá Earp \cite{WS2015}, Walpuski \cite{Wal2016}, and the last author \cite[Section~5.2]{Platt2022}.
Even more than for $G_2$-manifolds, there are few general results about $G_2$-instantons and current work concentrates on constructing and studying examples.
The instanton constructions cited above use detailed information about the K3 surfaces in question, so an ample supply of explicit examples of K3 surfaces is helpful for these constructions.
The main challenge in this research area is the compactification of the moduli space of $G_2$-instantons.
As for metrics, it is hoped that explicit examples of degenerations will suggest candidates for a compactification.

As a general principle, checking whether a given bundle is stable becomes computationally harder the more line bundles there are on the K3 surface.
This can be seen in practice in the work of Jardim, Menet, Prata and S\'a Earp \cite[Theorem 3]{Jardim2017}.

The group of line bundles modulo isomorphism on a complex K3 surface $X$ is called the \emph{Picard group}, and we will denote it by $\Pic X$.
It is a finitely generated free abelian group, and its rank, which we denote by $\rho(X)$, is called the \emph{Picard number} of $X$.
The Picard group can be endowed with a non-degenerate bilinear form, making it a lattice (that is, a finitely generated free abelian group with a non-degenerate bilinear form) called the \emph{Picard lattice}. 
A lattice can be identified by a matrix, called its Gram matrix, see~\autoref{r:Gram} for definition and notations.
We are then interested in examples of K3 surfaces with low Picard number and two commuting involutions, one holomorphic and one non-holomorphic.
What is the lowest Picard number possible for such K3 surfaces, for any given degree?
Combining results of Kondo and Elsenhans--Jahnel,  \cite{Kondo1992,EJ08}, we give the following complete answer to this question.

\begin{theorem}
    \label{c:rho-bounds}
    Let $X$ be a K3 surface of degree $2d$ and with Picard rank $\rho$, admitting a holomorphic and an anti-holomorphic involution which commute.
    If $d=1$, then $\rho \geq 1$ and there exist examples defined over $\IQ$ with $\rho=1$.
    If $d>1$, then $\rho \geq 2$ and there exist examples over~$\IQ$ with $\rho=2$ and Picard lattice $[0 \; 1 \; 0]$.
\end{theorem}

The proof of the theorem uses an explicit example of a K3 surface which can be endowed with multiple polarisations.
As mentioned above, it is desirable to have many more explicit examples for applications in $G_2$-geometry.
We therefore give the following explicit examples.

\begin{example}
    \label{e:our-new-K3-examples}
    We present the following new examples:
    
    \begin{enumerate}    
        \item 
        in \autoref{s:an-example},
        a construction method 
        for K3 surfaces over $\IQ$ of degree $4$ and $8$ with Picard lattice $[4 \; 5 \; 2]$,
        together with an explicit example;
    
        \item 
        in \autoref{s:another-example},
        a construction method  
        for K3 surfaces over $\IQ$ of degree $6$ with Picard lattice $[6 \; 6 \; 2]$,
        together with an explicit example;
    
        \item 
        in \autoref{s:RealK3},
        for every $d>3$, 
        we show the existence of K3 surfaces over $\IR$ with Picard lattice  $[2 \; d{+}1 \; 2d]$ and polarization of degree $2d$.
    \end{enumerate}
    We also review the following known K3 surface: 
    \begin{enumerate} 
        \item[(4)] 
        in \autoref{s:NodalQuartics}, a K3 surface defined over $\IQ$  with Picard lattice $[4 \; 0 \; {-}2]$ admitting infinitely many polarizations of different degrees. 
    \end{enumerate}
\end{example}

We say that a complex K3 surface $X$ \emph{can be defined over a field} $k$ if $X$ admits a projective model, whose defining equations have coefficients contained in $k$. 
The existence of an anti-holomorphic involution on $X$ is guaranteed as soon as the surface can be defined over~$\IR$: 
indeed, in this case, the complex conjugation provides an anti-holomorphic involution. 
In the following, we will always use this one as it will commute with the holomorphic involutions we will provide.
Conversely, if there exists an anti-holomorphic involution on $X$, then $X$ can be defined over $\IR$ (see Silhol's \cite[Proposition 1.3]{Sil89}).

A biholomorphic map on a complex K3 surface $X$ comes from an automorphism on the associated algebraic surface. 
A holomorphic involution on $X$ will commute with the anti-holomorphic one if and only if the associated automorphism on the model of $X$ over $\IR$ can be defined over $\IR$. 
As a conclusion, we have the following important remark.

\begin{remark}\label{r:anti-holo}
    A complex K3 surface has a commuting holomorphic and anti-holomorphic involution if and only if the underlying algebraic K3 surface can be defined over $\IR$ and admits an automorphism of order 2.
    This observation gives the main idea of this work: 
    we look for K3 surfaces that can be defined over $\IR$ admitting an ample divisor $D$ with $D^2=2$.
    This ample divisor will provide the automorphism of order $2$ we are after, see~\autoref{l:Deg2K3}.
\end{remark}

The paper is structured as follows.
In~\autoref{s:Background} we introduce K3 surfaces and their Picard lattice, reviewing basic properties and results.
In the same section, we summarise the known results about involutions of K3 surfaces.
In~\autoref{s:DoubleCovers} we review branched double covers of $\IP^2$ which play a central role in the rest of the article and  we prove \autoref{c:rho-bounds}. 
In~\autoref{s:NodalQuartics} we study nodal quartics, serving as a link between K3 surfaces of degree $2$ and higher degrees as they provide a construction of a K3 surface over $\IQ$ with Picard rank $2$ realising infinitely many degrees.
Smooth quartics admitting a holomorphic involution are studied in~\autoref{s:smooth-quartics-with-picard-number-2}, where we prove that there are infinitely many non-isomorphic smooth quartics over $\IC$ with Picard number $2$ and admitting a holomorphic involution.
We also show that if a quartic surface has a holomorphic involution that can be extended to a linear involution of its ambient space $\IP^3$, then its Picard number is higher.
We construct examples of K3 surfaces of degrees~$2d$, with Picard number $2$, defined over $\IQ$, and admitting a holomorphic involution,
for $2d=4,8$ in~\autoref{s:an-example} and for $2d=6$ in~\autoref{s:another-example}.
In~\autoref{s:RealK3} we study when complex K3 surfaces can be defined over the real numbers and prove that any even lattice $N$ of rank $1\leq r \leq 10$ and signature $(1,r-1)$ can be realized as Picard lattice of a K3 surface defined over $\IR$. 
We use this result to show the existence of infinitely many non-isomorphic K3 surfaces defined over $\IR$ with $\rho=2$ admitting an involution.
The \texttt{Magma} code pertaining to our explicit examples can be found online at \href{http://github.com/danielplatt/quartic-k3-with-involution}{github.com/danielplatt/quartic-k3-with-involution}.

\section*{Acknowledgments}
We would like to thank Alex Degtyarev, Alice Garbagnati, Bert van Geemen, Ronald van Luijk, Bartosz Naskr\c{e}cki, and Simon Salamon for many useful conversations on this topic.
We also thank the anonymous referee for their comments.
The first author was partially supported by the PRIN grant \texttt{PRIN202022AGARB\char`_01} while in Milano.

\section{Some background}\label{s:Background}

In this section, we provide the necessary background about K3 surfaces. We mostly follow Huybrecht~\cite{Huy16} and Kondo~\cite{Kon20}; 
for basic notions and results in algebraic geometry we refer to Hartshorne~\cite{Har77} and Griffith~\cite{Gri94}.
As a start, we give a formal definition of a complex K3 surface.

By complex K3 surface we mean a compact connected complex manifold~$X$ of dimension two with trivial canonical bundle and $H^1(X,\mathcal{O}_X)=0$. It follows that $X$ admits a holomorphic $2$-form that is nowhere vanishing and unique up to scaling. 
Examples of K3 surfaces are, among others, 
smooth quartic surfaces in~$\IP^3$, 
smooth intersections of a cubic and a quadric in $\IP^4$,
smooth intersections of three quadrics in~$\IP^5$,
and double covers of~$\IP^2$ ramified over a smooth curve of degree six.

An algebraic K3 surface $X$ over a field~$k$ is a projective, geometrically integral and smooth variety~$X$ over~$k$ such that its canonical bundle $\Omega_{X/k}^2$ is isomorphic to $\mathcal{O}_X$ and with $H^1(X,\mathcal{O}_X)=0$. 
By the GAGA theorem \cite{Ser1956}, we have the following connection between algebraic and complex K3 surfaces:
starting with an algebraic K3 surface $X$ over~$k$, where $k$ is a subfield of $\IC$, the complex points~$X(\IC)$ have a natural structure of a complex K3 surface; as a converse, if one starts with a \emph{projective} complex K3 surface $X$, then there exists an algebraic K3 surface $X'$ over $\IC$ such that $X'(\IC)=X$. More specifically, a complex K3 surface comes from an algebraic K3 surface if and only if it can be embedded in projective space.

\begin{remark}\label{r:NonProj}
  There are complex K3 surfaces that are not projective and hence not algebraic (in fact non-projective K3 surfaces are dense in the moduli space of complex K3 surfaces).
  Nikulin shows that if a K3 surface has a non-symplectic automorphism of finite order, then it is projective, see~\cite[Theorems 0.1a) and 3.1a)]{Nikulin1979}.
As in both Joyce--Karigiannis' and Kovalev--Lee's constructions a \emph{non-symplectic} involution is needed, non-projective K3 surface will not be considered in this paper.
\end{remark}

Because of~\autoref{r:NonProj}, our attention will focus on algebraic complex K3 surfaces. 
Therefore, unless stated otherwise, 
a complex K3 surface will be assumed to be algebraic. We will use both the algebraic and manifold structure on the K3 surface.

\subsection{The Picard lattice of a K3 surface}
\label{ss:PicLattice}

Let $k$ be a field and let~$\overline{k}$ be an algebraic closure of~$k$. Let~$X$ denote an algebraic K3 surface over~$k$. 
Throughout this paper, we will denote by~$X_{\overline{k}}$ the base change of~$X$ to~$\overline{k}$. 
If~$k$ is a subfield of~$\IC$, e.g. $k=\IQ$ or $k=\IR$, we use the notation~$X_\IC$ to denote the base change of~$X$ to~$\IC$.

The \emph{Picard group} of~$X$, denoted~$\Pic X$, is the group of isomorphism classes of invertible sheaves on~$X$ under the tensor product.
It is known that the Picard group is isomorphic to~$H^1(X,\mathcal{O}_X^*)$, where~$\mathcal{O}_X^*$ denotes the sheaf whose sections over an open set $U$ are the units in the ring~$\mathcal{O}_X(U)$. 
There is also a natural isomorphism $\Cl X\cong \Pic X$, where~$\Cl X:=\Div X/{\sim}$ denotes the divisor class group modulo linear equivalence \cite[Corollary~II.6.16]{Har77}.

There is a unique symmetric bilinear pairing $\Div X_{\overline{k}} \times \Div X_{\overline{k}}\to \IZ$, sending any two divisors~$C,D$ on~$X_{\overline{k}}$ to the integer $C.D$, such that if~$C$ and~$D$ are non-singular curves meeting transversally, the number~$C.D$ is the number of points of~$C\cap D$;
for a divisor $D$ the number $D^2:=D.D$ is called the \emph{self-intersection} of $D$.
The pairing depends only on the linear equivalence classes and so it descends to a pairing $\Pic X_{\overline{k}} \times \Pic X_{\overline{k}} \to \IZ$ \cite[Theorem V.1.1]{Har77}. 
We can identify $\Pic X$ as a subgroup of $\Pic X_{\overline{k}}$, and this pairing on $\Pic X_{\overline{k}}$ will then induce a pairing on $\Pic X$.

If $k$ is a subfield of $\IC$, then using the GAGA theorem (see \cite{Ser1956}), there is a natural identification of the group $\Pic X_\IC$ with the group of line bundles on a topological space and so this definition is equivalent with the definition given in the introduction.
If $k$ is a subfield of $\IC$, we define the \textit{Picard number} of $X$ to be the Picard number of the associated complex surface as defined in the introduction, or equivalently, to be the rank of $\Pic X_\IC$.

\begin{remark}
    Although we can identify $\Pic X$ as a subgroup of $\Pic X_{\overline{k}}$, it is not always the case that they have the same rank. 
    In the explicit examples we are constructing, we will prove that $\Pic X=\Pic X_{\IC}$, but in general this is definitely not the case. 
    To avoid any confusion, we will use the definition Picard number only for complex K3 surfaces. 
\end{remark}

A \textit{lattice}~$\Lambda$ is a finitely generated free abelian group together with a non-degenerate bilinear pairing $\Lambda\times \Lambda\to \IZ$.
The Picard group of a projective K3 surface is finitely generated and torsion free~\cite[\S 1.2]{Huy16}, and the induced intersection pairing is non-degenerate~\cite[Proposition 1.2.4]{Huy16}, hence making the group $\Pic X$ a lattice. We refer to this as the \emph{Picard lattice} of~$X$.  
If~$D$ is the class of a curve of arithmetic genus $p_a$, the adjunction formula shows that $D^2=2p_a-2$. It follows that~$\Pic X$ is an \emph{even} lattice, i.e., $D^2$ is even for all~$D$ in~$\Pic X$. 

\begin{remark}\label{r:Gram}
    Choosing a basis for $\Pic X$, we can associate a \emph{Gram matrix} to the bilinear pairing. We introduce the notation~$[a\; b\; c]$ with~$a,b,c\in \IZ$ for a lattice of rank 2 and a basis with Gram matrix equal to $\begin{pmatrix}
        a & b\\
        b & c
    \end{pmatrix}$.
    We also introduce the notation $\langle a \rangle$ for a lattice of rank 1 and a basis with Gram matrix equal to $\begin{pmatrix}
        a
    \end{pmatrix}$.
\end{remark}

\subsection{The second cohomology group and the Hodge decomposition}

Let $X$ denote a complex K3 surface. Using the exponential sequence 
$$
0\to\IZ\xrightarrow{2\pi i}\cO_{X(\IC)}\xrightarrow{\text{exp}}\cO_{X(\IC)}^*\to 0
$$ 
we get an induced injective map from the Picard group of a complex K3 surface~$X$ to~$H^2(X,\IZ)$, denoted~$c_1\colon \Pic X\hookrightarrow H^2(X,\IZ)$. 
The cup product gives~$H^2(X,\IZ)$ the structure of a lattice, of which the restriction to the Picard group is exactly the intersection pairing. 

It is known that the lattice $H^2(X,\IZ)$ is isomorphic to the \emph{K3 lattice} 
$$
\Lambda_{\text{K}3}:=U^{\oplus 3} \oplus E_8(-1)^{\oplus 2},
$$
where $U$ is the hyperbolic  rank $2$ lattice $[0 \; 1 \; 0]$, and $E_8(-1)$ is the unique even unimodular negative-definite lattice of rank $8$ \cite[Proposition 1.35]{Huy16}. This is a lattice of signature $(3,19)$. By the Hodge index theorem \cite[Subsection 1.2.2]{Huy16}, the signature of the Picard lattice is~$(1,\rho(X)-1)$. It follows that $\rho(X)\leq 20$.

The isomorphism between the spaces $H^2(X,\IZ)$ and $\Lambda_{\text{K}3}$ is not canonical. Choosing an isometry $\alpha_X\colon H^2(X,\IZ)\to \Lambda_{\text{K}3}$ is called a \emph{marking} of $X$, and we call the tuple $(X,\alpha_X)$ a~\emph{marked}~K3 surface. For the remainder of this subsection, 
we will always assume that $X$ is marked by a marking $\alpha_X$.

We have a natural identification $H^2(X,\IZ)\otimes \IC\cong H^2(X,\IC)$. 
As a complex K3 surface is a K\"ahler manifold, there is a Hodge decomposition 
$$
H^2(X,\IC)=\bigoplus_{p+q=2}H^{p+q}(X)\, ,
$$ 
where $H^{1,1}(X)$ can be defined over $\IR$ and $H^{2,0}(X)$ and $H^{0,2}(X)$ are complex conjugates of each other. Moreover, there are natural isomorphisms $H^{p,q}(X)\cong H^q(X,\Omega_X^p)$. 
By the Lefschetz theorem on~$(1,1)$-classes \cite[p.163]{Gri94}, we have that the image of the map $c_1$ equals the set $H^{1,1}(X)\cap H^2(X,\IZ)$. 

Next we fix a nowhere vanishing holomorphic $2$-form $\omega_X\in H^0(X,\Omega_X^2)\subset H^2(X,\IC)$. 
As $H^0(X,\Omega_X^2)$ is $1$-dimensional by definition of a K3 surface, we have 
$$
H^0(X,\Omega_X^2)=\langle \omega_X \rangle\cong \IC\; .
$$ 
Extending the cup product to $H^2(X,\IC)$, the form $\omega_X$ satisfies the \textit{Riemann conditions}  
\begin{equation}\label{eq:Rc}
    \omega_X^2=0 \ \text{ and } \ \omega_X\cdot \overline{\omega}_X>0\, ,
\end{equation} 
see \cite[p.57]{Kon20}. Observe that $H^{2,0}\oplus H^{0,2}(X)=\langle \text{Re}(\omega_X),\text{Im}(\omega_X)\rangle$ and $H^{1,1}(X)$ will be orthogonal to this subspace. In particular, $\omega_X$ determines the Hodge decomposition of $H^2(X,\IC)$.

We define the \textit{transcendental lattice} of $X$, denoted $T_X$, to be the lattice~$(\Pic X)^\perp\subseteq H^2(X,\IZ)$. 
By the Lefschetz theorem on~$(1,1)$-classes and the construction above, one can deduce that the~$2$-form~$\omega_X$ is contained in $T_X\otimes \IC$.

\subsection{Reduction maps and upper bounds for the Picard number}

Let~$X$ be a K3 surface defined over~$\IQ$.
Then~$X$ can be defined by equations with coefficients in~$\IZ$.
Let~$p$ be a prime and let~$X_p$ denote the variety obtained considering the equations defining~$X$ over~$\IZ$ modulo~$p$.
We call~$X_p$ the \emph{reduction} of~$X$ modulo $p$.
We say that~$p$ is \emph{a prime of good reduction} for~$X$ if~$X_p$ is smooth, i.e.,~$X_p$ is a K3 surface over $\IF_p$.
Then it is known that there is an embedding $\Pic X_\IC \to \Pic X_{p,\overline{\IF}_p}$.
So the rank of~$\Pic X_{p,\overline{\IF}_p}$ gives an upper bound for the Picard number of~$X$. 
For details we refer to \cite[Subsection~17.2.5]{Huy16}. 

One can calculate the rank of $\Pic X_{p,\overline{\IF}_p}$ explicitly. Let $\ell\neq p$ be another prime. Using étale cohomology we can embed $\Pic X_{p,\overline{\IF}_p}$ into $H^2_\text{ét}(X_{p,\overline{\IF}_p},\IQ_\ell(1))$. 
Let $F_p\in \text{Gal}(\overline{\IF}_p/\IF_p)$ be the Frobenius automorphism. Then this induces an automorphism~$F_p^*$ on~$H^2_\text{ét}(X_{p,\overline{\IF}_p},\IQ_l(1))$. 
The Tate conjecture, which is proven for $K3$ surfaces of positive characteristic by Kim and Pera~\cite{Kim15},
implies that the rank of~$\Pic X_{p,\overline{\IF}_p}$ is exactly the number of eigenvalues~$\lambda$ of~$F_p^*$ such that~$\lambda$ is a root of unity. 
The characteristic polynomial of~$F_p^*$ can be computed explicitly. A practical way to do it by counting points is given by van Luijk~\cite[Section 2]{vL07}. 
There are numerous refinements and developments, see for example the works of Elsenhans--Jahnel and Charles~\cite{EJ11, Cha14}.

\texttt{Magma} has a built-in function to calculate the rank of $\Pic X_{p,\overline{\IF}_p}$ with $p$ of good reduction in the case that $X$ is a double cover of the plane. The function is based on Elsenhans and Jahnel's work~\cite{EJ10}. We will use this to compute upper bounds for our Picard number.

\subsection{The ample cone and polarized K3 surfaces}\label{ss:AmpleCone}

In this subsection, we introduce some basic facts about the ample cone of a~K3~surface;
we follow~\cite[Chapter 8]{Huy16}. Let $X$ be a K3 surface and let~$H$ be an ample divisor on $X$. We define the \emph{positive cone} $\cP^+\subset \Pic X\otimes \IR$ of $X$ as the connected component of the set $\{ \lambda\in \Pic X\otimes\IR \; :\; \lambda^2>0\}$ containing $H$.
The \emph{ample cone} $\cA$ of $X$ is the set of all (non-zero) finite sums $\sum_i a_iL_i$ inside $\Pic X\otimes \IR$, with~$L_i\in\Pic X$ an ample class and $a_i\in\IR_{>0}$.

Assume that $H$ is \emph{primitive}, meaning that the quotient module $\Pic X/\IZ[H]$ has no torsion. Then we call the pair $(X,H)$ a \emph{polarized K3 surface} of degree~$H^2$.
Note that the degree~$H^2$ is even, as~$\Pic X$ is an even lattice.
For example, by simply taking~$H$ to be a hyperplane section, one can see that:  
double covers of~$\IP^2$ ramified above smooth sextic curves are polarized K3 surfaces of degree~2;
smooth quartics in~$\IP^3$ are K3 surfaces of degree~4;
smooth intersections of a cubic and a quadric in~$\IP^4$ have degree~6;
smooth intersections of three quadrics in~$\IP^5$ have degree~8.

Let $(X,H)$ be a polarized K3 surface.
The Nakai--Moishezon--Kleiman criterion says that a class~$D\in\Pic X$ is ample if and only if $D^2>0$ and $D.[C]>0$ for all curves $C\subset X$ \cite[Theorem V.1.10]{Har77}.
For polarized K3 surfaces, this result can be refined (see \cite[Corollary 8.1.6]{Huy16}), yielding that $D$ is ample if and only if
\begin{enumerate}
    \item $D^2>0$,
    \item $D.[C]>0$ for all the smooth \emph{rational} curves $C\subset X$,
    \item $D.H>0$.
\end{enumerate}
It follows that 
$$
\cA=\{ \lambda\in \cP^+\; :\; \lambda . [C]>0 \textrm{ for all smooth rational curves } C\subset X \} \subseteq \cP^+.
$$
This motivates the construction below, called the \emph{chamber decomposition} of $\Pic X$.

\begin{remark}\label{r:no-2} 
  Recall that on a K3 surface, if $C$ is the class of a curve of arithmetic genus $g$, then $C^2=2g-2$.
  This means that rational curves have self-intersection $-2$.

  The reverse is not true. In general a $-2$-class does not need effective. 
  An effective $-2$-class is the class of a rational curve if and only if the class is irreducible.

  If $\Pic X$ does not represent $-2$, i.e. there are no $-2$-classes,  then there are no rational curves either. 
  It follows that $\cA = \cP^+$.
\end{remark}

Let $\Delta:=\{ \delta \in \Pic X \; : \; \delta^2=-2\}$ be the set of \emph{roots} of $\Pic X$.
If $\delta\in \Delta$, then using the Riemann--Roch theorem one can see that either $\delta$ or $-\delta$ is effective.
We define $\Delta_+:=\{\delta\in\Delta \; : \;  \delta \text{ is effective}\}$.
Clearly the equality $\Delta=\Delta_+ \sqcup \Delta_-$ holds,
where $\Delta_-:=-\Delta_+$.
For any root~$\delta\in\Delta$ we define the \emph{reflection} 
$$
s_\delta\colon \Pic X\to \Pic X, \;\; x\mapsto x+(x . \delta)\delta\, .
$$
Notice that $s_\delta$ is an involution. By linearity it can be extended to $\Pic  X\otimes \IR$ and it preserves the intersection pairing, and so it preserves~$\cP^+$.
We call $\Fix (s_\delta)=\cP^+\cap \delta^\perp$ the \emph{wall} associated with~$\delta$.
The connected components of $\cP^+ \setminus \bigcup_{\delta\in\Delta} \delta^\perp$ are called the \emph{chambers} of $\cP^+$.
We define the \emph{Weyl group} $W$ as the group of isometries generated by the reflections,
$$
W:=\langle s_\delta \; :\; \delta\in \Delta_+\rangle\, .
$$
The group $W$ acts transitively on the set of chambers of $\cP^+$  \cite[Proposition 8.2.6]{Huy16}.
From the refined Nakai--Moishezon--Kleiman criterion above, it follows that the ample cone is the chamber whose walls are given by $[C]^\perp$, for every smooth rational curve $C\subset X$. 
Moreover, none of the conditions that we get from the $-2$-classes of these curves are superfluous, see also~\cite[Remark 8.2.8]{Huy16}.

\begin{remark}\label{r:-2classcurve}
  If $\rho(X)=2$, the above implies that there are at most two smooth rational curves on $X$ up to linear equivalence. 
\end{remark}

\subsection{Moduli spaces of K3 surfaces}\label{ss:moduli}

Next we will set up the theory of moduli spaces of K3 surfaces over $\IC$. We will closely follow \cite[Chapter 6]{Kon20} and refer to this for more background.

The \emph{period domain of complex K3 surfaces} is the set $$\Omega:=\{[\omega]\in \IP(\Lambda_{\text{K}3}\otimes\IC): \langle\omega,\omega\rangle=0, \langle \omega,\Bar{\omega}\rangle >0\}. $$
Let $(X,\alpha_X)$ be a marked K3 surface over $\IC$. 
By the Riemann condition, we have $[\alpha(\omega_X)]\in \Omega$ and we call this point the \emph{period} of  $(X,\alpha_X)$. In particular, if $\cM$ is the set of isomorphism classes of marked K3 surfaces, this gives us a map $\lambda\colon \cM\to \Omega$, called the \emph{period map} of marked K3 surfaces. It is a deep theorem on complex K3 surfaces that this map is surjective, see \cite[Theorem~6.9 and proof in chapter 7]{Kon20}. 
The surjectivity of the map $\lambda$ is only guaranteed if we also take non-algebraic complex K3 surfaces into account. In fact, if we take a general element $[\omega]\in \Omega$, a marked K3 surface~$(X,\alpha_X)$ with period $[\omega]$ will be non-algebraic with $\Pic X=\{0\}$.

Next we will look at the period map for polarized K3 surfaces and their moduli spaces.
Let~$d$ be a positive integer and take a primitive element $h\in \Lambda_{\text{K}3}$ with $h^2=2d$. Denote by~$\Lambda_{2d}$ the orthogonal complement of~$h$ in~$\Lambda_{\text{K}3}$. Then $\Lambda_{2d}$ has signature $(2,19)$ and the isomorphism class of~$\Lambda_{2d}$ is independent of the choice of $h$. 

We define the \emph{period domain of polarized K3 surfaces of degree $2d$} as the set $$\Omega_{2d}:=\{[\omega]\in \IP(\Lambda_{2d}\otimes\IC): \langle\omega,\omega\rangle=0, \langle \omega,\Bar{\omega}\rangle >0\}\subset \Omega. $$
The group $\Gamma_{2d}:=\{\gamma\colon 
\Lambda_{\text{K}3} \xrightarrow{\sim} \Lambda_{\text{K}3} \colon \gamma(h)=h\}$ 
of automorphisms of $\Lambda_{\text{K}3}$ that fix $h$ acts on $\Omega_{2d}$ and one can show that the quotient $\Omega_{2d}/\Gamma_{2d}$ is a variety. This quotient can be identified as the \emph{moduli space of polarized K3 surfaces of degree 2d}. We will motivate this next.

Let $\mathcal{M}_{2d}$ denote the set of isomorphism classes of polarized K3 surfaces of degree~$2d$. 
Let~$(X,H)$ be a polarized K3 surface in $\mathcal{M}_{2d}$ and let $h=\alpha_X(H)$ denote the image of the ample divisor $H$ under a marking $\alpha_X$ of $X$. 
As~$\omega_X$ lies in $T_X\otimes \IC$, it is perpendicular to $\Pic X$ and therefore to $H$ in particular.
This means that~$[\alpha_X(\omega_X)]\in \Omega_{2d}$. 
This association defines a (set-theoretical) map $\lambda_{2d}\colon \mathcal{M}_{2d}\to \Omega_{2d}/\Gamma_{2d}$ called the \emph{period map of K3 surfaces of degree $2d$}. 
This map is injective by the Torelli theorem for polarized K3 surfaces, see   \cite[Theorem 6.5]{Kon20} for a modern formulation and proof.
With respect to a generalized definition of polarized K3 surface, this map is also surjective. We refer to \cite[Theorem~6.12 \& Section 7.3]{Kon20} for details and a sketch of the proof.

\subsection{Involutions on K3 surfaces}

Automorphisms of K3 surfaces have been widely studied, see e.g. \cite[\S 15]{Huy16} for an overview.
Given a K3 surface~$X$, and any automorphism $g\colon X\to X$, there is some~$a\in \IC^*$ such that $g^*(\omega_X)=a\cdot \omega_X$ for any non-zero holomorphic 2-form~$\omega_X$ on~$X$. If~$a=1$, we call an automorphism \emph{symplectic}. Otherwise, we will call it \emph{non-symplectic}.

Let $g\colon X\to X$ be an automorphism of a K3 surface that is an involution. Observe that $g$ is non-symplectic if and only if $g^*(\omega_X)=-\omega_X$. Nikulin showed that if $g$ is symplectic, then any fixed point is isolated \cite[\S 5]{Nikulin1979}, and if $g$ is non-symplectic, then the fixed point set is empty or consists of a disjoint union of non-singular curves  \cite[Theorem 4.2.2]{Nik83}. From the proof of these statements, we can deduce a lower bound for the Picard number. This is summarized in the following theorem.

\begin{theorem}\label{t:NikulinInv}
Let $X$ be a complex K3 surface with a holomorphic involution $g$;
let $F$ be the set of fixed points of $g$.
\begin{itemize}
    \item If $g$ is symplectic, then $F$ consists of eight isolated points and $\rho (X)\geq 9$.
    \item If $g$ is non-symplectic,
    then one of the following holds.
    \begin{enumerate}
      \item $F=\emptyset$. In this case $\rho (X)\geq 10$.
      \item $F$ is a disjoint union of two non-singular elliptic curves. 
      In this case $\rho(X)\geq 10$.
      \item $F=C\sqcup E_1 \sqcup \cdots \sqcup E_k$, where $C$ is a non-singular curve and $E_1,\ldots,E_k$ are non-singular rational curves.
      In this case $\rho(X) \geq 11-p_a(C)+k$, where $p_a(C)$ denotes the arithmetic genus of~$C$.
    \end{enumerate}
\end{itemize}
\end{theorem}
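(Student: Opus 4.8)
The plan is to recall Nikulin's classification of the fixed locus of a non-symplectic involution on a K3 surface and then read off the Picard number bounds from the structure of the invariant lattice. The starting observation is that for a holomorphic involution $g$ on a K3 surface $X$, the induced action $g^*$ on $H^2(X,\IZ)$ splits this lattice into the eigenspaces $H^2(X,\IZ)^{g^*=1}$ and $H^2(X,\IZ)^{g^*=-1}$ (after tensoring with $\IZ[1/2]$ the splitting is exact). Since $\omega_X \in T_X\otimes\IC$, the behaviour $g^*(\omega_X)=a\omega_X$ forces $a=\pm1$ because $g$ has finite order; so $g$ is symplectic iff $g^*$ acts trivially on $\omega_X$, and non-symplectic iff $g^*(\omega_X)=-\omega_X$. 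In the symplectic case $g^*$ acts trivially on the whole transcendental lattice, hence $T_X \subseteq H^2(X,\IZ)^{g^*=1}$, so $\Pic X \supseteq (H^2(X,\IZ)^{g^*=1})^\perp = H^2(X,\IZ)^{g^*=-1}$. Nikulin computed that for a symplectic involution $H^2(X,\IZ)^{g^*=-1}\cong E_8(-2)$, which has rank $8$; together with the polarization class, which is $g^*$-invariant and thus not in this sublattice, one gets $\rho(X)\geq 9$. The eight isolated fixed points come from the holomorphic Lefschetz fixed point formula applied to $g$.

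For the non-symplectic case the roles are reversed: $g^*(\omega_X)=-\omega_X$ implies $\omega_X \in (H^2(X,\IZ)^{g^*=-1})\otimes\IC$, so $H^2(X,\IZ)^{g^*=1}$ is a sublattice of the orthogonal complement of $\omega_X$, i.e. of $\Pic X$ after intersecting with $H^{1,1}$; more precisely $H^2(X,\IZ)^{g^*=1}\subseteq \Pic X$ because invariant integral $(1,1)$-classes are exactly the invariant part of $H^2(X,\IZ)$ once $\omega_X$ is anti-invariant. Hence $\rho(X) \geq \operatorname{rank} H^2(X,\IZ)^{g^*=1}$, and it remains to bound this rank from below in each of the three geometric cases. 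Here I would invoke the fixed-point analysis from \cite[Theorem 4.2.2]{Nik83}: the fixed locus $F$ is a disjoint union of smooth curves, and by the topological and holomorphic Lefschetz formulas one relates the Euler characteristic and the holomorphic Euler characteristic of $F$ to the traces of $g^*$ on $H^*(X)$. Writing $r := \operatorname{rank} H^2(X,\IZ)^{g^*=1}$, the trace of $g^*$ on $H^2(X,\IC)$ equals $r - (22 - r) = 2r - 22$, so the topological Lefschetz number is $2 + (2r-22) + 1 = 2r - 19 = \chi_{\mathrm{top}}(F)$. If $F=\emptyset$ this gives $r = 19/2$, impossible over $\IZ$ unless we are careful — the correct statement is that $F=\emptyset$ forces the invariant lattice to be $U\oplus E_8(-2)$ or a related rank-$10$ lattice, giving $\rho\geq 10$; similarly for $F$ a union of two elliptic curves one gets $\chi_{\mathrm{top}}(F)=0$, again $r=?$ pinned down by Nikulin's finer invariant-lattice computation to rank $10$. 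In the general case $F = C \sqcup E_1\sqcup\cdots\sqcup E_k$ with $g(C)=C$, $C$ of genus $g=p_a(C)$ and each $E_i$ rational, one computes
\[
\chi_{\mathrm{top}}(F) = (2 - 2g) + 2k = 2r - 19,
\]
which after rearranging and using the parity constraint from Nikulin's table yields $r \geq 11 - g + k$, hence $\rho(X) \geq 11 - p_a(C) + k$.

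The main obstacle is that the crude Lefschetz count above pins down the \emph{trace} of $g^*$ but not directly the rank of the invariant sublattice, and it naively produces half-integers; the honest argument requires Nikulin's precise determination of the invariant lattice $H^2(X,\IZ)^{g^*=1}$ in terms of the triple of invariants $(r, a, \delta)$ (its rank, the length of its discriminant group, and a parity bit) and the resulting classification of possible fixed loci. So rather than re-derive that classification I would cite \cite[\S4]{Nik83} (see also \cite[Theorem 4.2.2]{Nik83} and the surrounding discussion, and \cite[\S15]{Huy16}) for the enumeration of the pairs (invariant lattice, fixed locus), and then the Picard number bounds in each of the listed cases follow immediately from $\rho(X) \geq \operatorname{rank} H^2(X,\IZ)^{g^*=1}$ together with the formula $\operatorname{rank} H^2(X,\IZ)^{g^*=1} = 11 - p_a(C) + k$ for the top case and the constant value $10$ for the two special cases. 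The symplectic statement is entirely classical and needs only the $E_8(-2)$ computation plus the Lefschetz fixed-point count for the eight points.
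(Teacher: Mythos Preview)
Your overall strategy coincides with the paper's: in the non-symplectic case you set $L_+ := H^2(X,\IZ)^{g^*=1}$, observe $L_+ \subseteq \Pic X$, and then read off $\operatorname{rk} L_+$ from Nikulin's classification; in the symplectic case you cite the standard $E_8(-2)$ computation. The paper does precisely this, citing \cite[Theorem~4.2.2]{Nik83} and \cite[Theorem~2.1~a)]{AST11} for the inclusion $L_+\subseteq\Pic X$, and then extracting $r$ in case~(3) from Nikulin's formulas $\ell = 22 - r - 2p_a(C)$ and $k=(r-\ell)/2$, which combine to $r = 11 - p_a(C) + k$.

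There is, however, a genuine arithmetic slip in your Lefschetz computation that you should fix rather than retreat from. For a K3 surface one has $b_0=b_4=1$ and $b_1=b_3=0$, so the topological Lefschetz number of $g$ is
\[
1 + (2r-22) + 1 \;=\; 2r - 20,
\]
not $2r-19$. With the correct value the argument works cleanly in every case: if $F=\emptyset$ then $0 = 2r-20$ gives $r=10$; if $F$ is two disjoint elliptic curves then $\chi_{\mathrm{top}}(F)=0$ again gives $r=10$; and in case~(3) one has $\chi_{\mathrm{top}}(F) = (2-2p_a(C)) + 2k = 2r-20$, whence $r = 11 - p_a(C) + k$. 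So your ``crude Lefschetz count'' is in fact a perfectly honest argument once the arithmetic is repaired, and it yields the rank of $L_+$ directly, bypassing the detour through Nikulin's $(r,\ell,\delta)$ invariants that the paper takes. You still need to cite \cite[Theorem~4.2.2]{Nik83} for the shape of the fixed locus (smooth curves, and the trichotomy), but the numerical bound then follows from Lefschetz alone.
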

\begin{proof}
If $g$ is symplectic, then Nikulin~\cite[\S 5]{Nikulin1979} shows that $F$ consists of eight isolated points;
in this case, $\rho (X)\geq 9$, as shown in~\cite[\S 15.1]{Huy16}.

In \cite[Theorem 4.2.2]{Nik83},  the fixed point sets of non-symplectic involutions are classified as stated in (1)---(3).
To obtain the bound of the Picard number,
consider the lattice 
$$
L_+:=\{ x\in H^2(X,\IZ) \, :\, g^*(x)=x \}.
$$
Let $r$ and $\ell$ denote the rank of $L_+$ and the length of the discriminant group $L_+^\#$, respectively. 
Artebani, Sarti, and Taki show that $L_+$ is contained in $\Pic X$~\cite[Theorem 2.1 a)]{AST11},  hence $r\leq \rho (X)$.
Cases~(1) and~(2) then follow directly from \cite[Theorem 4.2.2]{Nik83}, where it is shown that in these cases $r=10$.
The same theorem also tells us that
in case~(3), 
$\ell = 22-r-2p_a(C)$ and $k=(r-\ell)/2$.
By substituting $\ell$ in the expression of $k$, we get
$k=r-11+p_a(C),$
and hence $\rho (X) \geq r = 11-p_a(C)+k$,
completing the proof. 
\end{proof}

From this theorem, it follows that 
if one wants to find a K3 surface with a low Picard number and holomorphic involution then they should should look for non-symplectic involutions.

\section{Double covers of the plane}\label{s:DoubleCovers}

In this section, we review  the theory of K3 surfaces of degree 2 as double covers of $\IP^2$. 
Moreover, we will recall that if a complex K3 surface has Picard number $1$ and it admits a holomorphic involution, then it has to be a complex K3 surface of degree $2$.
First, we show that in order to have a double cover structure it is enough to have an ample divisor of self-intersection $2$.
This result will be key in our constructions.
We then prove~\autoref{c:rho-bounds}.

\begin{lemma}\label{l:Deg2K3}
  Let $X$ be a complex K3 surface and $H\in\Pic X$ an ample divisor with $H^2=2$.
  Then~$X$ is isomorphic to a double cover of $\IP^2$ branched along a smooth sextic curve.
\end{lemma}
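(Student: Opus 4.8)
The plan is to use the ample divisor $H$ with $H^2 = 2$ to produce the linear system $|H|$ and show it defines a degree-$2$ morphism to $\IP^2$ with the right branch locus.

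First I would compute $\dim |H|$ using Riemann--Roch on the K3 surface: since $H$ is ample, $h^1(X,\cO_X(H)) = h^2(X,\cO_X(H)) = 0$ (the latter by Serre duality and ampleness, the former e.g. by Kodaira vanishing or the standard Mumford--Ramanujam vanishing on K3 surfaces for nef and big divisors, noting $H$ has no base components). Riemann--Roch on a K3 surface gives $\chi(\cO_X(H)) = 2 + \tfrac{1}{2}H^2 = 3$, so $h^0(X,\cO_X(H)) = 3$ and $|H| \cong \IP^2$. Next I would argue that $|H|$ is base-point free. Here I would invoke the general theory of linear systems on K3 surfaces (as in \cite[Chapter 2]{Huy16}): a nef divisor $H$ on a K3 surface with $H^2 > 0$ has base points only if there is an elliptic curve $E$ with $E^2 = 0$ and $E.H = 1$, which would force $H - E$ to be effective with $(H-E).H = 1$, and one checks via Hodge index that this is incompatible with $H^2 = 2$ and $H$ ample (the relevant sublattice would be degenerate or violate the signature constraint). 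So $|H|$ is base-point free and defines a morphism $\varphi\colon X \to \IP^2$.

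Then I would show $\varphi$ is a finite morphism of degree $2$. Since $H^2 = 2$, the degree of $\varphi$ times the degree of the image is $2$; the image cannot be a curve (that would force $H^2 = 0$), so the image is all of $\IP^2$ and $\deg\varphi = 2$. Finiteness follows because $H$ is ample, so $\varphi$ contracts no curves: if $C$ were contracted then $H.C = 0$, contradicting ampleness. A finite degree-$2$ morphism onto a smooth rational surface from a smooth surface is automatically a double cover $X \cong \operatorname{Spec}(\cO_{\IP^2} \oplus \cL^{-1})$ for a line bundle $\cL$ on $\IP^2$ with $2\cL$ the branch divisor $B$; from $\cO_X \oplus \cL^{-1} = \varphi_*\cO_X$ and the K3 condition $\omega_X = \cO_X$ together with $\omega_X = \varphi^*(\omega_{\IP^2} \otimes \cL) = \varphi^*\cO_{\IP^2}(-3)\otimes\varphi^*\cL$, one gets $\cL = \cO_{\IP^2}(3)$, so $B$ is a sextic curve. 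Finally, $B$ must be smooth: a singular point of $B$ would produce a singularity (or a non-canonical point) on $X$, contradicting smoothness of $X$ — more precisely, an ordinary node or worse on $B$ would give a rational double point or worse on the double cover, whereas $X$ is smooth by hypothesis.

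I expect the main obstacle to be the base-point-freeness of $|H|$: this is the one step that genuinely uses the structure theory of linear systems on K3 surfaces rather than elementary lattice arithmetic, and it requires ruling out the elliptic-pencil degeneration cleanly using the fact that $H$ is ample (not merely nef) together with $H^2 = 2$. Everything downstream — the degree count, finiteness, identification of the branch sextic, and smoothness — follows fairly formally from that and from the K3 conditions. I would also need to be slightly careful that $\varphi$ is genuinely $2$-to-$1$ and not, say, birational onto a singular model, but ampleness of $H$ rules this out since a birational morphism to $\IP^2$ from a surface with $H^2 = 2$ would have exceptional curves meeting $H$ trivially.
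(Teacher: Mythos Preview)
Your approach is essentially the same as the paper's, just with far more detail: the paper's proof simply asserts that $|H|$ induces a two-to-one map $X\to\IP^2$ ramified over a sextic and then notes that smoothness of $X$ forces the branch sextic to be smooth, implicitly deferring to the standard theory (cf.\ \cite[Chapter~2 and Remark~2.2.4]{Huy16}) that you are unpacking.

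One caveat in your unpacking: the Hodge-index argument you sketch for base-point-freeness does not work as stated. The sublattice $\langle H, E\rangle \cong [2\; 1\; 0]$ has determinant $-1$ and signature $(1,1)$ (it is isometric to $U$), so it is neither degenerate nor forbidden by Hodge index. The correct argument goes through the structure of the base locus: if $|H|$ is not base-point-free then (Saint-Donat) $H \sim aE + \Gamma$ with $E^2=0$, $\Gamma$ a smooth rational $(-2)$-curve, and $E.\Gamma = 1$, giving $H^2 = 2a - 2$; hence $a = 2$ and $H.\Gamma = a - 2 = 0$, contradicting ampleness of $H$. Everything else in your outline is fine.
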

\begin{proof}
    The linear system $|H|$ induces a two-to-one map $X\to \IP^2$ which ramifies above a sextic.
    As~$X$ is a K3 surface, $X$ is smooth, and hence the ramification locus is also smooth.
\end{proof}

\begin{remark}\label{r:DoubleCoverInv}
    In the case of double covers of $\IP^2$, a holomorphic involution is very easy to write down, as there is an involution induced by the double cover structure.
    More explicitly, if $X$ is such a K3 surface, then $X$ can be written as $w^2=f(x,y,z)$ in the weighted projective space 
    $\IP(1,1,1,3)$ with variables $x,y,z,w$ of weight $1,1,1,3$, respectively, and $f$ a homogeneous polynomial of degree~$6$ in three variables, defining a smooth sextic curve in $\IP^2$.
    Then the double cover $X\to \IP^2$ is given by 
    sending the point $(x:y:z:w)$ to~$(x:y:z)$ and the the double cover involution is given by
    \begin{equation}\label{eq:iota}
        \iota\colon (x:y:z:w)\mapsto (x:y:z:-w)\; .
    \end{equation}
\end{remark}

\begin{figure}[htbp]
\begin{minipage}{0.25\textwidth}
    \includegraphics[width=5cm]{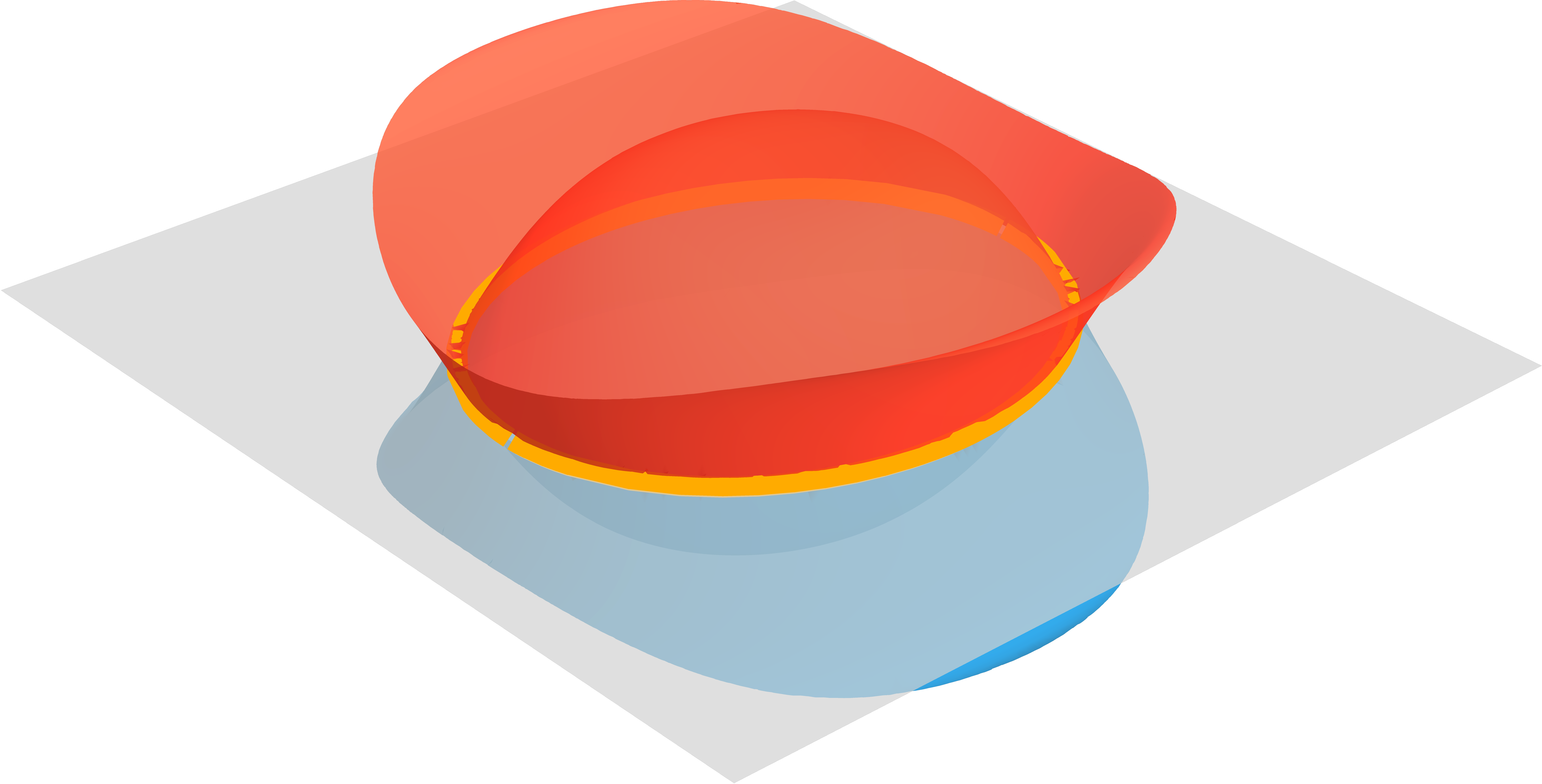}
\end{minipage}
\begin{minipage}{0.1\textwidth}
    \vspace{0.3cm}
    
    {\color{sunyellow}$Z(f)$}

    \vspace{0.3cm}
    
    {\color{darkgray}$\;\;\;\;\;\; \IP^2$}
\end{minipage}
\caption{A double cover of $\IP^2$ branched over the smooth sextic $Z(f)$ is an example of a K3 surface. It admits the holomorphic involution that swaps the sheets of the cover, coloured red and blue in the figure.}
\end{figure}

\begin{proposition}\label{p:GeneralDoubleSextic}
Let $X$ be a complex K3 surface with $\Pic X=\IZ\cdot H$.
Then the following are equivalent.
\begin{enumerate}
    \item $X$ admits a holomorphic involution.
    \item $H^2=2$.
    \item $X$ is isomorphic to a double cover of $\IP^2$ ramified above a smooth sextic.
\end{enumerate}
\end{proposition}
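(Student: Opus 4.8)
The plan is to prove the cyclic chain of implications $(2)\Rightarrow(3)\Rightarrow(1)\Rightarrow(2)$, since the statement is an equivalence of three conditions and this is the most economical route.

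\textbf{$(2)\Rightarrow(3)$.} This is essentially immediate from the tools already assembled. If $H^2=2$ then, since $\Pic X = \IZ\cdot H$, the generator $H$ is ample up to sign: we have $H^2=2>0$, so either $H$ or $-H$ lies in the positive cone $\cP^+$; replacing $H$ by $-H$ if necessary, $H\in\cP^+$. Since $\Pic X$ has rank $1$, it represents $-2$ only if $2a^2=-2$ for some integer $a$, which is impossible, so there are no $-2$-classes and by \autoref{r:no-2} the ample cone equals $\cP^+$; hence $H$ is ample. Now \autoref{l:Deg2K3} applies directly and gives that $X$ is a double cover of $\IP^2$ branched over a smooth sextic.

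\textbf{$(3)\Rightarrow(1)$.} If $X$ is isomorphic to a double cover of $\IP^2$ ramified over a smooth sextic, then the deck transformation of the double cover is a holomorphic involution on $X$ (explicitly the map $\iota$ of \eqref{eq:iota} in \autoref{r:DoubleCoverInv}). So $X$ admits a holomorphic involution.

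\textbf{$(1)\Rightarrow(2)$.} This is the step I expect to be the main obstacle, and it is where \autoref{t:NikulinInv} does the work. Suppose $g\colon X\to X$ is a holomorphic involution. Since $\rho(X)=1$, none of the lower bounds $\rho(X)\geq 9,10,11-p_a(C)+k$ coming from the symplectic case or the non-symplectic cases (1), (2) of \autoref{t:NikulinInv} can hold unless case (3) applies with $11-p_a(C)+k\leq 1$, i.e. $p_a(C)\geq 10+k\geq 10$; but then $\rho(X)\geq r = 11 - p_a(C) + k$ which is $\leq 1$ only in the borderline situation — here I need to argue carefully that in fact \emph{no} holomorphic involution is possible unless we are forced into a configuration that pins down $H^2=2$. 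The cleanest way: a rank-one Picard lattice $\langle H^2\rangle$ admits an isometry of order $2$ only via $\pm\id$; the involution $g^*$ acts on $\Pic X$, and since $g$ is non-symplectic (symplectic is excluded by $\rho\geq 9$) one checks via the structure of $L_+$ and \autoref{t:NikulinInv} that $g^*$ must act as $+\id$ on the rank-one lattice $\Pic X$, forcing $H$ to be $g$-invariant; then $g$ descends from the linear system $|H|$, realising $X$ as a double cover of $\IP^2$ (if $H^2=2$) or exhibiting a contradiction with $\rho(X)=1$ for larger $H^2$ via the genus/fixed-locus count. Concretely I would invoke case (3) of \autoref{t:NikulinInv}: the fixed locus contains a curve $C$ of genus $g$ with $g = 10 + k \ge 10$ when $\rho=1$ is to be respected only if $r\le 1$, and relate $C$ to the ramification divisor of the quotient map $X\to X/g$, which for a genus-$2$ fixed curve forces the quotient to be $\IP^2$ and $H^2=2$. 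The delicate point, and the one requiring the most care, is ruling out all fixed-locus types of \autoref{t:NikulinInv} except the one compatible with $\rho(X)=1$, and then extracting $H^2=2$ from that single surviving case; I would lean on the relation $\rho(X)\geq r$ together with $r = 11 - p_a(C) + k$ to see that $\rho(X)=1$ forces $k=0$ and $p_a(C)=10$, and then identify $X/g$ with $\IP^2$ and the polarization class with the pullback of $\mathcal{O}_{\IP^2}(1)$, whose square is $2$.
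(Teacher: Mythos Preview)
Your $(2)\Rightarrow(3)$ and $(3)\Rightarrow(1)$ are correct and essentially match the paper (indeed your $(2)\Rightarrow(3)$ is slightly more careful, since you verify ampleness of $H$ before invoking \autoref{l:Deg2K3}). For $(1)\Leftrightarrow(2)$, however, the paper does not argue directly at all: it simply cites \cite[Corollary~15.2.12]{Huy16} and is done.

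Your attempt at a direct proof of $(1)\Rightarrow(2)$ via \autoref{t:NikulinInv} is on a viable track but has a real gap. You correctly rule out the symplectic and the first two non-symplectic cases and land in case~(3); since $g^*$ preserves the ample cone one has $g^*H=H$, so $L_+=\IZ H$ and $r=1$, while $k=0$ because $\Pic X\cong\langle H^2\rangle$ has no $(-2)$-classes. This gives $p_a(C)=10$ (your ``genus-$2$ fixed curve'' is a slip). But from $C^2=18$ and $C\sim nH$ you only get $n^2 H^2=18$, which still permits $(n,H^2)=(1,18)$ as well as $(3,2)$. To close the argument you need one further ingredient that is absent from your sketch: either Nikulin's result that $L_+$ is $2$-elementary (forcing the discriminant group of $\langle H^2\rangle$ to be $\IZ/2$, hence $H^2=2$), or the observation that $b_2(X/g)=r=1$, so that the smooth rational quotient must be $\IP^2$ by the classification of rational surfaces, after which the pullback of a line has square~$2$ and must be $\pm H$. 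You assert ``identify $X/g$ with $\IP^2$'' but give no reason; that identification is precisely the missing step, and without it the case $H^2=18$ is not excluded.
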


\begin{proof}
The equivalence $(1)\iff (2)$ is~\cite[Corollary 15.2.12]{Huy16}.

\autoref{l:Deg2K3} shows that $(2)\implies (3)$.

Now assume $(3)$, i.e., $X$ is isomorphic to a double cover $Y$ of $\IP^2$ ramified above a sextic;
denote by $\phi\colon X \to Y$ an isomorphism between $X$ and $Y$,
and let $\iota$ denote the double cover involution of $Y$ described in~\autoref{r:DoubleCoverInv}.
Then $\phi^{-1}\circ\iota\circ\phi$ is a holomorphic involution,
proving $(3)\implies (1)$.
\end{proof}

The first example of a K3 surface defined over $\IQ$ with Picard lattice isometric to $\langle2\rangle$ was given in~\cite{EJ08}.
Notice that this K3 surface, being defined over $\IQ$,  admits a holomorphic and a commuting anti-holomorphic involution, see~\autoref{p:GeneralDoubleSextic} and~\autoref{r:anti-holo}. 
Here we give another example.

\begin{example}\label{e:GenericDoubleSextic}
Consider the K3 surface 
$X_2\subset \IP(1,1,1,3)$ defined by $w^2=f$
with 
\begin{align*}
    f :=& \, 7x^6 + x^5y - x^4y^2 - 9x^3y^3 + 2x^2y^4 - 6xy^5 + 7y^6 + 3x^5z + \\
    & + 7x^4yz + 6x^3y^2z - 4x^2y^3z - 4xy^4z + 9y^5z + 2x^4z^2 + \\
    & - 4x^3yz^2 - 6x^2y^2z^2 - 7xy^3z^2 + 5x^2yz^3 + 5xy^2z^3 + \\
    & - 8y^3z^3 - 6x^2z^4 + 5xyz^4 + 8y^2z^4 + 7xz^5 - 2yz^5 + z^6.
\end{align*}
Let $X_{2,p}$ denote the base change to an algebraic closure of the reduction of $X_2$ modulo $p$.
Using the \texttt{Magma} package for K3 surfaces of degree two, based on~\cite{EJ10},
we compute the rank and discriminant of the Picard lattice of $X_{2,5}$ and $X_{2,13}$.
We obtain the rank of $\Pic X_{2,5}$ and the rank of $\Pic X_{2,13}$ both equal 2, 
and $\det \Pic X_{2,5}\not\equiv \det \Pic X_{2,13} \bmod \IQ^2$.
From this we conclude that $\rho (X_2)=1$, see~\cite[Remark 17.2.17]{Huy16}.
Notice that $X_2$ has a double cover involution and it is defined over~$\IQ$.
\end{example}

We now have all we need to prove \autoref{c:rho-bounds}.

\begin{proof}[Proof of \autoref{c:rho-bounds}]
   Let $X$ be a projective K3 surface of degree $2d$, Picard number $\rho$ and admitting a holomorphic involution.
   \autoref{p:GeneralDoubleSextic} implies that if $d=1$ then $\rho \geq 1$, if $d\geq 2$ then $\rho\geq 2$.
   To complete the proof then we only need to provide examples realizing these lower bounds.

   \autoref{e:GenericDoubleSextic} provides one with $d=1$ and Picard lattice isomorphic to $\langle 2\rangle$.

   In~\cite{Kondo1992}, Kond\={o} shows that the elliptic K3 surface 
   $$
      X_{66}\colon y^2=x^3+t(t^{11}-1)
   $$
   has Picard lattice isomorphic to $U$ and automorphism group isomorphic to $\IZ/66\IZ$, hence it admits an involution.
   We can choose generators $e,f$ of $U=[0\; 1\; 0]$ such that $e^2=f^2=0$ and $e.f=1$.
   Then the only two $-2$-classes in $U$ are $\pm (e-f)$.
   Assume $O=(e-f)$ is effective. Hence $O$ is the only effective $-2$-curve of $S$, that is, the section of the fibration.
    The positive cone of $X$ is given by divisors $xe+yf$ such that $xy>0$.
    Hence the ample cone is given by divisors $D=xe+yf$ such that $xy>0$ and $D.(e-f)>0$. 
    As $D.(e-f)=-x+y$, we obtain that $D$ is ample if and only if $y>x>0$.
    From this it follows that for every $d>1$ the divisor $e+df$ is ample and of degree $d$, showing that $X_{66}$ realizes the minimum $\rho=2$ for every $d>1$.

    We conclude by noticing that both examples are defined over $\IQ$.
\end{proof}

\begin{remark}
  Let $\cH_{p,2d}:=\{ (X,H,\alpha) \}$
denote the set of complex polarized K3 surfaces $(X,H)$ such that $H^2=2d>0$ and $X$ admits an automorphism $\alpha$ of prime order $p$.
If one defines
$$
h_{p,2d}=\min_{X\in \cH_{p,2d} } \{ \rho (X) \}\; 
$$
then \autoref{c:rho-bounds} says that 
$$
h_{2,2d}=
\begin{cases}
    1 \text{ if } d=1 \\
    2 \text{ if } d>1
\end{cases}
.
$$
\end{remark}

\section{Quartics with a node}\label{s:NodalQuartics}

In this section, we will see how a nodal quartic surface can be used to produce  an example of a K3 surface defined over $\IQ$, with   degree $2d$ and Picard number $2$ which admits a non-symplectic involution, for infinitely many values of $d$.

Let $X\subset \IP^3$ be a quartic surface with a single node, i.e., an $A_1$-singularity, and no other singularities. Then a smooth model $S$ of $X$ can be obtained by blowing up this node. This surface $S$ contains at least two prime divisors: 
the pullback $H$ of a hyperplane of~$X$,  
and the exceptional divisor $E$ over the node of $X$. The exceptional divisor is a smooth rational curve, and so~$E^2=-2$.
As the hyperplane can be assumed \emph{not} to contain the node, we have that $H^2=4$ and $H.E=0$.
This means that $\Pic S$ contains the lattice $\langle H, E\rangle \cong [4 \; 0\; {-}2]$. If~$\rho(S)=2$, then this will be the complete Picard group of~$S$. 

\begin{proposition}\label{p:NodalQuartic}
    If $\rho(S)=2$, then we have that $\Pic S=\langle H,E\rangle \cong [4 \; 0\; {-}2]$.
    Moreover, 
    \[
    \Aut S = \langle \iota \rangle \cong \IZ/2\IZ
    \]
    and for the 
    induced involution $\iota^*$ on $\Pic S$, we have $\iota^*(H)=3H-4E$ and $\iota^*(E)=2H-3E$.
\end{proposition}
\begin{proof}
    The discriminant of $[4 \; 0\; {-}2]$ is $-8=-2^3$. 
    As the discriminant of a sublattice of rank 2 can only differ by a square, 
    if $\langle H,E\rangle \subsetneqq \Pic S$, then $\det (\Pic S) =-2$. 
    It is immediate to see that there are no even lattices of rank $2$ and discriminant $-2$, 
    and so we deduce that $\Pic S=\langle H,E\rangle\cong [4 \; 0\; {-}2]$, proving the first statement.

    The automorphism group of a K3 surface with such Picard lattice has been calculated and is isomorphic to $\IZ/2\IZ$, see Bini~\cite[Theorem 1 (ii) and Example 2]{Bin05},
    where he also proves the statement about $\iota^*$.
\end{proof}

In \autoref{r:NodalInv} we give a geometric description of
how a non-trivial involution on $S$ is induced by the node on the quartic $X$.

\begin{remark}\label{r:NodalInv}
  Let $O$ denote the node on $X$.
  A non-trivial involution on $S$ is induced by a non-trivial birational involution on $X$, which can easily be constructed.
  Without loss of generality, we assume that $O=(0:0:0:1)$. 
  Hence, $X$ is defined by an equation of the form 
  \begin{equation}\label{eq:NodalQuartic}
    X\colon f_4+f_3w+f_2w^2=0,
  \end{equation}
  where $f_i$ is a homogeneous polynomial in $x,y,z$ of degree $i$. 
  Consider the projection
  \begin{equation}\label{eq:NodalDoubleCover}
  \pi\colon X\dashrightarrow \IP^2\, ,\; (x:y:z:w)\mapsto (x:y:z).
  \end{equation}
  The map $\pi$ is undefined only at $O$, 
  the pre-image of $(x:y:z)\in\IP^2$ is 
  $$
  \left\{ \left(x:y:z:\frac{-f_3\pm \sqrt{f_3^2-4f_4f_2}}{2f_2}\right)\right\},
  $$
  showing that $\pi$ is generically $2$-to-$1$ and that the branch locus $B\subset \IP^2$ is the sextic curve given by
  \begin{equation}\label{eq:NodalBranch}
      B\colon f_3^2-4f_4f_2=0\; .
  \end{equation}
  Generically~$B$ will be smooth and of genus $10$.
  The double cover involution is then the map
  $\tilde\iota\colon X\dashrightarrow X$ defined by
  $$
  (x:y:z:w)\mapsto (x:y:z:-f_3/f_2-w).
  $$
  Note that $\tilde\iota$ is undefined along $D:=X\cap \{ f_2 =0\}$.
  After blowing up the node $O$, one can extend $\tilde\iota$ to an involution $\iota\colon S\to S$ by swapping $D$ and the exceptional divisor $E$.
  This construction shows that 
  $\pi \colon S \to \IP^2$ 
  is a double cover ramified above the sextic curve $B$ and that 
  $\iota\colon S\to S$ 
  is the associated double cover involution.

  In~\cite[Example 2]{Bin05}, G. Bini shows that a K3 surface with Picard lattice isometric to $[4\; 0\; -2]$ can always be realized as in the construction above.

  Yet another way to see that $X$ doubly covers $\IP^2$ consists in considering the set $\cL$ of lines of~$\IP^3$ passing through~$O$. 
  Any line~$\ell$ in~$\cL$ intersects $X$ in $O$ with multiplicity (at least) $2$, as $O$ is a node for $X$.
  As~$X$ has degree~$4$, this means that $\ell$ intersects $X$ in two other points.
  In particular, given a point~$P\in X$, 
  one can always consider the line $\ell_P\in \cL$ passing through $P$ and $O$.
  As $\cL$ is isomorphic to $\IP^2$, the map 
  $$
  X\setminus \{O\} \to \cL\, , \;\; P\mapsto \ell_P
  $$
  gives a map that is generically $2$-to-$1$.
  After blowing up $O$, this map extends to a double cover 
  $S\to \IP^2$ as above.
\end{remark}

In the remainder, we suppose that $\rho(S)=2$. We will prove that~$S$ has a polarization of degree~$2d$ for~$d$ such that $2$ is a quadratic residue $\bmod\, d$. To show this, we will use the following lemma.
\begin{lemma}\label{l:AmpleConeS}
    Let $S$ be as above with $\Pic S=\langle H,E\rangle$. 
    The ample cone of $S$ is given by the set
    $$\cA=\left\{xH-yE : 0<y<\frac{4}{3}x\right\}.$$
\end{lemma}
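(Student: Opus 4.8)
The plan is to compute the ample cone directly from the chamber decomposition: since $\rho(S)=2$, the positive cone $\cP^+\subset\Pic S\otimes\IR$ is a two-dimensional cone, and by \autoref{ss:AmpleCone} the ample cone $\cA$ is the chamber containing $H$, cut out by the walls $\delta^\perp$ coming from the effective $(-2)$-classes $\delta$ (equivalently the smooth rational curves). First I would identify the boundary rays of $\cP^+$: writing a general class as $xH-yE$ with $(xH-yE)^2 = 4x^2-2y^2$, the positive cone is bounded by the two rays where $4x^2-2y^2=0$, i.e. $y=\pm\sqrt{2}\,x$; the component containing $H$ (which is $x>0$) is $\{xH-yE : x>0,\ |y|<\sqrt2\,x\}$. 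So it remains to show that inside this region the only wall is the one along $y=\tfrac43 x$, and that $\cA$ is the sub-chamber $0<y<\tfrac43 x$.

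Next I would pin down the rational curves. We already know $E$ is a smooth rational curve with $E^2=-2$; its wall is $E^\perp$. Since $E=0\cdot H+(-1)E$, orthogonality $(xH-yE).E = 2y = 0$ gives the wall $y=0$, so $E^\perp$ is the ray $y=0$ (the $H$-axis), which is one boundary of the claimed cone. For the other wall I would look for a second effective $(-2)$-class. By \autoref{r:-2classcurve} there are at most two rational curves up to linear equivalence; the natural candidate is the strict transform of a line in $\IP^3$ through the node $O$ — by the last paragraph of \autoref{r:NodalInv} such a line meets $X$ in $O$ (with multiplicity $2$) and two further points, so its strict transform $L$ satisfies $L.H = 2$ and $L.E = 1$ (it passes through the node), hence in the basis it is a class $\delta = aH+bE$ with $\delta.H = 4a = 2$ and $\delta.E = -2b = 1$, i.e. $\delta = \tfrac12 H - \tfrac12 E$, and indeed $\delta^2 = \tfrac14\cdot4 - \tfrac14\cdot 2\cdot(-2)\cdot\ldots$ — more cleanly, $2\delta = H-E$ has $(H-E)^2 = 4-2 = 2$, so I would instead take the primitive class in the right direction. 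The cleaner route: solve $(aH+bE)^2=-2$ with $a,b$ coprime integers, i.e. $4a^2-2b^2=-2$, giving $b^2-2a^2=1$, whose first positive solution is $(a,b)=(2,3)$, i.e. $\delta = 2H+3E$ — but this has $\delta.H = 8>0$ and $\delta.E = -6<0$, so $-\delta$ is the effective one or I should track signs; in the form $xH - yE$ the class $2H-3E$ has $(2H-3E)^2 = 16-18 = -2$, and $(2H-3E).H = 8>0$, so $\delta_0 := 2H-3E\in\Delta_+$. Its wall is $\delta_0^\perp$: $(xH-yE).(2H-3E) = 8x - 6y = 0$, i.e. $y = \tfrac43 x$. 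This is exactly the second bounding ray in the statement.

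Finally I would assemble the conclusion. The two $(-2)$-classes $E$ and $\delta_0=2H-3E$ give walls $y=0$ and $y=\tfrac43 x$, and since $0<\tfrac43 < \sqrt2$, the ray $y=\tfrac43 x$ lies strictly inside $\cP^+$ between the $H$-axis and the upper boundary $y=\sqrt2\,x$; thus these two walls cut $\cP^+$ (restricted to $x>0$) into chambers, one of which is $\{xH-yE: 0<y<\tfrac43 x\}$. I must check this is the chamber containing an ample class: the pullback $H$ of a hyperplane not through the node is nef and big but lies on the wall $y=0$, so instead I would exhibit an ample class in the interior, e.g. check that $nH - E$ for $n$ large is ample via the refined Nakai–Moishezon criterion of \autoref{ss:AmpleCone} (positive square, and positive intersection with both rational curves $E$ and the strict transform corresponding to $\delta_0$), placing it in the chamber $0<y<\tfrac43 x$. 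It then follows that $\cA = \{xH-yE : 0<y<\tfrac43 x\}$, since by \autoref{r:-2classcurve} there are no further rational curves and hence no further walls. The main obstacle I anticipate is the bookkeeping with signs and primitivity of the second $(-2)$-class and confirming it genuinely corresponds to an effective curve (the strict transform of a line through the node) rather than just an abstract lattice vector — once that identification is clean, the rest is the elementary cone geometry sketched above.
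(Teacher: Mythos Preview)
Your route differs from the paper's. You locate the second $(-2)$-class $\delta_0=2H-3E$ purely lattice-theoretically, by solving the Pell equation $b^2-2a^2=1$. The paper instead produces it \emph{geometrically}: in the double-cover model $\pi\colon S\to\IP^2$ of \autoref{r:NodalInv}, the conic $Q=\{f_2=0\}$ is everywhere tangent to the branch sextic $B$, so $\pi^*Q$ splits as $Q_1+Q_2$ with $Q_1,Q_2$ smooth rational and swapped by $\iota$; identifying $[Q_1]=E$ and computing $Q_1.Q_2=6$, $Q_2^2=-2$, $Q_2.H>0$ forces $[Q_2]=2H-3E$. The payoff of the paper's approach is that one has an honest smooth rational curve in hand, so the refined Nakai--Moishezon criterion applies without further discussion.

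Your endgame has a genuine gap. You show $\delta_0=2H-3E$ is an effective $(-2)$-class (via $\delta_0.H=8>0$ and Riemann--Roch), but never that it is the class of a \emph{smooth rational curve}; the line-through-the-node attempt cannot work because a generic such line is not contained in $X$ at all (it meets $X$ in $O$ and two further points, cf.\ the last paragraph of \autoref{r:NodalInv}). Your fallback---checking that $nH-E$ is ample by intersecting with ``both rational curves $E$ and $\delta_0$''---is then circular, since a priori the second rational curve could be $12H-17E$ (the next Pell solution), making the ample cone $\{0<y<\tfrac{24}{17}x\}$ instead. A clean lattice-only repair: since $2H-3E$ is effective, every nef $D=xH-yE$ satisfies $D.(2H-3E)=8x-6y\ge 0$; together with $D.E=2y\ge 0$ this gives $\overline{\cA}\subseteq\{0\le y\le\tfrac43 x\}$. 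On the other hand $H$ is nef (pullback of an ample class under the resolution $S\to X$), so the ray $y=0$ lies in $\overline{\cA}$. Finally, the Pell equation has no solution with $a=1$, so no root wall lies strictly between $y=0$ and $y=\tfrac43x$; hence $\{0<y<\tfrac43x\}$ is a single chamber and therefore equals $\cA$.
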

\begin{proof}
By hypothesis $S$ has Picard number $2$, so we know that there are at most two $-2$-curves, see~\autoref{ss:AmpleCone}.
From \autoref{p:NodalQuartic} we know that on $S$ there are exactly two $-2$ curves, and their classes are $E$ and $2H-3E=:N$.

Set $D:=xH-yE\in \Pic S\otimes \IR$. By the discussion in~\autoref{ss:AmpleCone} it follows that $D$ is ample if and only if $D.E>0$ and $D.N>0$. In particular, if $D$ is ample, we must have $(xH-yE). E > 0$, from which it follows that $y>0$. This gives the first boundary. For the other boundary, we now deduce similarly that $0<(xH-yE).(2H-3E)=8x-6y$, which gives that $y<\frac{4}{3}x$, completing the proof.
\end{proof}

\begin{proposition}\label{p:Ample2dClass}
Let $S$ be as above with $\Pic S=\langle H,E\rangle$. Let $d>2$ be an integer such that $2$ is a quadratic residue $\bmod\, d$.
Then there is an ample class $D\in\Pic S$ such that $D^2=2d$ with $D$ primitive. 
\end{proposition}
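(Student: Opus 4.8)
The plan is to write a candidate class $D = xH - yE$ with $x, y \in \IZ$ and impose $D^2 = 2d$, which by the Gram matrix $[4\;0\;-2]$ amounts to the Diophantine condition $4x^2 - 2y^2 = 2d$, i.e. $2x^2 - y^2 = d$. So the first step is to use the hypothesis that $2$ is a quadratic residue $\bmod\, d$ to produce an integer solution. Concretely, pick $y_0$ with $y_0^2 \equiv 2 \bmod d$; I would then look for a representation of the form $2x^2 - y^2 = d$ — this is a ternary-type quadratic equation, and the cleanest route is to observe that the form $2x^2 - y^2$ represents exactly those $d$ for which the condition holds (combined, if needed, with a local-global / genus-theory argument for the binary form $2x^2-y^2$, or simply by exhibiting $x=(d+y_0^2)/(2\cdot\text{something})$-style solutions after adjusting $y_0$ modulo $d$ so that $d + y^2$ is even and divisible appropriately). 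In the write-up I would phrase this as: there exist integers $x_0, y_0$ with $2x_0^2 - y_0^2 = d$; I expect this to be the main obstacle, since one must be careful that the quadratic-residue hypothesis genuinely guarantees a \emph{global} solution and not merely a solution modulo $d$, possibly requiring a small amount of genus theory for the form $x^2 - 2y^2$ or an explicit parametrization.

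Once a solution $(x_0, y_0)$ is in hand, the second step is to arrange that the resulting class lies in the ample cone. By \autoref{l:AmpleConeS} the ample cone is $\{xH - yE : 0 < y < \tfrac43 x\}$, so I need $0 < y_0 < \tfrac43 x_0$. If the chosen solution does not satisfy this, I would modify it: replacing $(x_0,y_0)$ by $(-x_0,-y_0)$ handles signs, and to push the ratio $y/x$ into the interval $(0,\tfrac43)$ one can apply a unit of the relevant quadratic order — i.e. use the automorphisms of the lattice $[4\;0\;-2]$ coming from solutions of the Pell-type equation, which act on the set of solutions of $2x^2 - y^2 = d$ and move the slope $y/x$ around. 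Since $H$ itself ($y=0$) is ample and corresponds to slope $0$, and the two walls have slopes $0$ and $\tfrac43$, a suitable iterate of such an automorphism (or its inverse) will land any given solution in the open cone; alternatively, among all solutions there is one minimizing $|x|$, and a short estimate shows it already satisfies $0 < y < \tfrac43 x$ after fixing signs. I would spell out whichever of these is cleanest.

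The third step is primitivity: if $D = xH - yE$ has $\gcd(x,y) = e > 1$, then $D^2 = 2d$ forces $e^2 \mid 2d$, and one replaces $D$ by $D/e$, which still lies in the ample cone (scaling by a positive rational preserves ampleness) and has $(D/e)^2 = 2d/e^2$. But we want $D^2 = 2d$ exactly with $D$ primitive, so instead I would argue from the start that we may choose the solution $(x_0,y_0)$ with $\gcd(x_0,y_0)=1$: if $e \mid x_0$ and $e \mid y_0$ then $e^2 \mid d$, and dividing through gives a solution of $2x^2 - y^2 = d/e^2$; iterating, one reaches a primitive solution of $2x^2 - y^2 = d/e^2$ for some square divisor $e^2$ of $d$, and then the hypothesis (2 being a QR mod $d$ implies it mod $d/e^2$, and one can rescale back) needs a moment's care — more simply, I would just note that a \emph{primitive} solution of $2x^2-y^2=d$ exists whenever \emph{any} solution does, because if $\gcd(x,y)=e$ then $e^2\mid d$ and $2$ is a QR mod $d$ with $e^2\mid d$ is impossible unless $e=1$ once $d$ is, say, squarefree; in general I would reduce to the squarefree part and reconstruct. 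Finally, having $D$ ample, primitive, with $D^2 = 2d$, the proof concludes; I would remark (referencing \autoref{l:Deg2K3} and the surrounding discussion) that this $D$ then furnishes a polarization of degree $2d$ on $S$, which is the payoff being set up for the main theorems.
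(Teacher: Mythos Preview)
Your overall architecture matches the paper's: reduce to the Pell-type equation $2x^2 - y^2 = d$, produce a solution, then use lattice isometries to push it into the ample cone described in \autoref{l:AmpleConeS}. The third step is essentially what the paper does too---your ``units of the quadratic order / automorphisms of $[4\;0\;-2]$'' is exactly the Weyl group $W = \langle s_E, s_N\rangle$ acting transitively on chambers, and the paper then checks that for $d>2$ the resulting class cannot land on a wall.

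The genuine gap is in your first and third steps taken together: you never actually prove that $2x^2 - y^2 = d$ has an integer solution, let alone a primitive one. You correctly flag that ``$2$ is a QR mod $d$'' is a priori only a local condition and that passing to a global solution needs work, but you do not carry it out; the sentences about ``exhibiting $x = (d+y_0^2)/(2\cdot\text{something})$'' and ``a small amount of genus theory'' are placeholders, not arguments. Your primitivity discussion then compounds the problem: the reduction to the squarefree part does not close, since $2$ being a QR mod $d$ with $e^2 \mid d$ is certainly possible for $e>1$, and you end up assuming what you need.

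The paper resolves both issues in one stroke by invoking Lagrange's classical reduction theorem (see e.g.\ \cite[Theorem 12.10.25]{Sho67}): the equation $y^2 - 2x^2 = -d$ has a primitive solution if and only if some $y^2 - 2x^2 = m$ with $|m| < \sqrt{2}$ does, i.e.\ $m = \pm 1$, and both of these visibly have (primitive) solutions. This simultaneously gives existence and primitivity, after which the Weyl group argument and the boundary check for $d>2$ finish the proof. If you want to avoid citing Lagrange, the genus-theory route you gesture at does work---the form $y^2 - 2x^2$ is alone in its genus---but you would have to state and use that fact explicitly, and separately argue primitivity.
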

\begin{proof}
By \autoref{l:AmpleConeS} the surface $S$ admits a primitive ample $2d$-class $D$ if and only if the Pell's equation
\begin{equation}\label{eq:2Pell}
    y^2-2x^2=-d
\end{equation}
admits a solution $(x_0,y_0)$ with $\gcd(x_0,y_0)=1$, $x_0>0$ and~$0<y_0<\frac{4}{3}x_0$.

Already in 1868, Lagrange showed
that the problem of determining the existence of solutions for the equation $x^2-Dy^2=m$ can be reduced to determining the existence of solutions for the equation $x^2-Dy^2=m'$, with $|m'|<\sqrt{D}$. 
See Shockley~\cite[Theorem 12.10.25]{Sho67} for a modern formulation and proof of Lagrange's result.

This means 
that~\eqref{eq:2Pell} can be reduced to the equation
\begin{equation}\label{eq:LagrangePell}
    y^2-2x^2=m,
\end{equation}
for some non-zero $m$ with $|m|<\sqrt{2}$.
In particular, we have that 
\eqref{eq:2Pell} has a primitive solution if and only if \eqref{eq:LagrangePell} does.
As $\sqrt{2}<2$,
it follows that $m=\pm 1$.
Both the equations $y^2-2x^2=\pm 1$ admit solutions (in fact, infinitely many),
and hence so does \eqref{eq:2Pell}.

Let $(x_1,y_1)$ be a positive primitive solution of $\eqref{eq:2Pell}$, 
that is, $x_1>0, y_1>0,$ and $\gcd(x_1,y_1)=1$.
Set $D_1:=x_1H-y_1E\in\Pic S$.
Then $D_1$ lies in the positive cone $\cP^+$ of $\Pic S$.
Moreover, as $\gcd(x_1,y_1)=1$, the class $D_1$ is primitive.
As the Weyl group of $\Pic S$ acts transitively on the set of chambers of $\cP^+$, 
there is an isometry of $\Pic S$ sending $D_1$ to a class $D_0=x_0H-y_0E$ in the closure of the ample chamber $\cA$.

Notice that, as $\gcd(x_0,y_0)=\gcd(x_1,y_1)=1$,
we have that either $x_0,y_0\neq 0$ or if one of the two is equal to $0$, the other one must be equal to $\pm 1$.
Assume $x_0=0$, then $y_0=\pm 1$ and hence $D_0^2=-2$, which contradicts the fact that $D_0$ lies in the positive cone and hence $D_0>0$;
if $y_0=0$ then $x_0=\pm 1$ and $d=2$, contradicting the hypothesis $d>2$.
So we have shown that both $x_0$ and $y_0$ are nonzero.
In order to show that $D_0$ lies inside $\cA$ and not on its border, we are left to show that $y_0$ is strictly smaller than $\frac{4}{3}x_0$.
By contradiction, assume $y_0=\frac{4}{3}x_0$. 
As $x_0,y_0\in\IN$ and $\gcd(x_0,y_0)=1$,
the only possibility is  $x_0=3,y_0=4$, from which it follows that $d=2$, again contradicting the assumption $d>2$.

We can then conclude that $D_0$ is a primitive class inside $\cA$.
That is, $D_0$ is ample and $D_0^2=D_1^2=2d$,
proving the statement.
\end{proof}

\begin{remark}
By considering the equation in \eqref{eq:2Pell} modulo $d$, 
we see that a necessary condition  to have solutions is that $2$ is a quadratic residue modulo $d$.
This means that $S$ has an ample class of degree $2d$ only if $d$ has that property.
\end{remark}

\begin{remark}
  We list the 19 values of $2< d\leq 100$ such that $2$ is a quadratic residue modulo $d$:
  $7, 14, 17, 23, 31, 34, 41, 46, 47, 49, 62, 71, 73, 79, 82, 89, 94, 97, 98$.
\end{remark}

To finish this section, we give an example of a nodal quartic that is defined over $\IQ$ for which the smooth model has Picard number $2$.

\begin{example}\label{e:NodalQuartic}
Let $X'\subseteq \IP^3$ be the nodal quartic surface defined as in~\eqref{eq:NodalQuartic} with
\begin{align*}
    f_2&:=3x^2 + 9xy + 9y^2 + 7xz + 8yz + 8z^2\, ,\\
    f_3&:=- 8x^3 + 8x^2y - 8xy^2 - 5y^3 - 9x^2z - xyz - 6y^2z + 5xz^2 - 9yz^2 - 7z^3\, ,\\
    f_4&:=- 5x^4 + 5x^3y + 4x^2y^2 + xy^3 - 4x^3z + 8x^2yz - 8xy^2z + 7y^3z \\
    &\hspace{.5cm} + 9x^2z^2 - 4xyz^2 + 2y^2z^2 - 6xz^3 + 3yz^3 - 4z^4\, .
\end{align*}
Then $X'$ is birationally equivalent to the double cover $S'$ of $\IP^2$ ramified above the sextic $B'$ defined by~\eqref{eq:NodalBranch}.
By looking at the reduction of $S'$ at $5$,
we see that $\rho (S')=2$.
Hence, $\Pic S'\cong [4 \; 0 \; {-}2]$.
As~$X'$ is defined over $\IQ$,
we conclude that $S'$ is a nodal quartic surface admitting a holomorphic as well as an anti-holomorphic involution commuting with each other.
\end{example}

\section{Smooth quartics with Picard number equal to 2}\label{s:smooth-quartics-with-picard-number-2}
In this section, we study the Picard number of quartic K3 surfaces admitting a holomorphic involution.
From~\autoref{p:GeneralDoubleSextic} 
it follows that such a K3 surface has Picard number at least $2$.

The following result gives a criterion for the existence of quartic K3 surfaces containing a curve of degree $e$ and genus $g$.
\begin{theorem}\label{t:Mori}
There exists a smooth quartic $X\subset\IP^3$ containing a smooth curve $C$ of degree $e$ and genus $g$ if and only if
\begin{enumerate}
    \item $g=e^2/8+1$ or
    \item $g<e^2/8$ and $(e,g)\neq (5,3)$.
\end{enumerate}
In this case, $X$ can be chosen such that $\Pic X$ is generated by $C$ and the hyperplane section.
\end{theorem}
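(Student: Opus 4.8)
The plan is to follow the classical approach of Mori, whose original theorem (see \cite{Mori1984}) classifies which pairs $(e,g)$ are realised by smooth curves on smooth quartic surfaces in $\IP^3$. I would first reduce the problem to a purely lattice-theoretic question together with an existence statement for K3 surfaces with prescribed Picard lattice, and then verify the two remaining geometric points: that the generic such K3 surface is actually a smooth quartic (rather than, say, a double plane or a surface with extra curves forcing a bigger Picard group), and that the excluded case $(5,3)$ genuinely fails.

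First I would set up the candidate lattice. A smooth curve $C$ of degree $e$ and genus $g$ on a quartic $X$ with hyperplane class $H$ satisfies $H^2=4$, $H.C=e$, and, by the adjunction formula on the K3 surface, $C^2=2g-2$. So the natural guess is $\Pic X=\langle H,C\rangle$ with Gram matrix $[4\ e\ 2g-2]$, of determinant $4(2g-2)-e^2=8g-8-e^2$. The signature of this rank-$2$ lattice must be $(1,1)$, i.e. the determinant must be negative, which forces $g\le e^2/8+1$; this already explains the shape of conditions (1)–(2). When $g=e^2/8+1$ the determinant is $0$, so $[4\ e\ 2g-2]$ is degenerate, and one instead works with the rank-$1$ lattice $\langle 4\rangle$ (the curve $C$ is then linearly equivalent to a multiple of $H$, e.g. a plane section configuration), while for $g<e^2/8$ the lattice is genuinely hyperbolic of rank $2$. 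By \cite[Theorem 1.14.4]{Nik80} (as used in the proof of \autoref{t:RealRepresentation}) such a rank-$2$ even hyperbolic lattice admits a primitive embedding into $\Lambda_{\mathrm{K}3}$, unique up to isometry, and by the surjectivity of the period map there exists a projective K3 surface $X$ with $\Pic X\cong[4\ e\ 2g-2]$; one then chooses the polarisation inside the ample cone so that $H$ is very ample of degree $4$, using the Saint-Donat criteria to guarantee that $|H|$ embeds $X$ as a smooth quartic and is not hyperelliptic (this is where one needs the absence of $-2$-curves $\Gamma$ with $\Gamma.H=0$, or classes of low degree forcing a different model). The curve $C$ is then represented by an honest smooth irreducible curve by a standard argument (moving $C$ within its linear system, which is base-point free away from fixed $-2$-curves when $\Pic X$ is as prescribed).

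The main obstacle is pinning down exactly when the construction above breaks, i.e. identifying the exceptional locus. The two failure modes are: the linear system $|H|$ does not give a quartic embedding (it could be hyperelliptic, giving a double cover of a quadric or of $\IP^2$, or $H$ could fail to be ample because of a $-2$-curve), and the class $C$ is not represented by a smooth \emph{irreducible} curve of the right genus (it might split, or the only effective representatives might be reducible). Both phenomena are governed by the finitely many $-2$-classes in $[4\ e\ 2g-2]$ and the classes of small $H$-degree, so the analysis is a finite check for each residue class of $(e,g)$; the case $(e,g)=(5,3)$ is precisely the one where the lattice $[4\ 5\ 4]$ (determinant $-9$) forces the existence of a class $D$ with $D^2=0$, $D.H=2$, which makes $|H|$ hyperelliptic — so no smooth quartic containing such a $C$ exists, while a sextic double plane or another model does. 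I would handle the positive direction uniformly via the period/lattice argument and then treat the boundary and the $(5,3)$ exclusion by explicit root and isotropic-vector computations in the rank-$2$ lattice, invoking Saint-Donat's projective-normality and hyperellipticity criteria to translate the lattice data into the geometric statement. The last sentence of the theorem — that $X$ can be chosen with $\Pic X$ generated by $C$ and $H$ — follows because the generic member of the relevant period subdomain has Picard lattice exactly $[4\ e\ 2g-2]$ (the complement being a countable union of proper subvarieties), so no further work is needed beyond the genericity statement already built into the period-map argument.
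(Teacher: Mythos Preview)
The paper does not prove this theorem at all: its entire proof consists of two citations, to Mori's original paper for the existence criterion and to Knutsen for the statement that $\Pic X$ can be taken to be generated by $C$ and $H$. Your proposal therefore goes far beyond what the paper does, and indeed what you sketch is essentially Mori's actual argument (lattice set-up, surjectivity of the period map, Saint-Donat's criteria for $|H|$, and a case analysis for the exceptional pair). As a comparison that is all there is to say: the paper outsources, you reconstruct.

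That said, your analysis of the excluded case $(e,g)=(5,3)$ contains a genuine error. In the lattice $[4\ 5\ 4]$ you claim there is a class $D$ with $D^2=0$ and $D.H=2$, forcing $|H|$ to be hyperelliptic. But writing $D=aH+bC$ one finds $D^2=2(2a+b)(a+2b)$, so $D^2=0$ forces $b=-2a$ or $a=-2b$, and then $D.H=4a+5b$ equals $-6a$ or $-3b$ respectively; neither can be $2$ for integers $a,b$. So there is no such isotropic class, and $|H|$ is \emph{not} hyperelliptic on the generic K3 with this Picard lattice. The actual obstruction is on the $C$ side: the class $E:=C-H$ satisfies $E^2=-2$ and $E.H=1$, so $E$ is (up to sign) the class of a line on the quartic, and $C.E=C^2-C.H=4-5=-1<0$. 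Hence every effective divisor in $|C|$ contains $E$ as a fixed component, and there is no smooth irreducible curve in the class $C$. In other words, the failure at $(5,3)$ is that $C$ cannot be represented by a smooth curve, not that $X$ fails to be a smooth quartic. Your framework already names this as one of the two possible failure modes, so the fix is local; but as written the argument misidentifies which mode occurs.
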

\begin{proof}
The first statement is due to Mori~\cite[Theorem 1]{Mor84},
the second to Knutsen~\cite[Theorem 1]{Knu02}.
\end{proof}

We use this theorem to show the existence of smooth quartics with Picard number two admitting a holomorphic involution.

\begin{proposition}\label{p:involution}
Let $e>4$ and let $X\subset \IP^3$ be a smooth quartic containing a smooth curve $C$ of genus 2 and degree $e$ such that $\Pic X=\langle H,C\rangle$, where $H$ denotes the hyperplane section. Then~$C$ is ample and $X$ admits a holomorphic involution induced by the double cover model associated to the linear system $|C|$. 
\end{proposition}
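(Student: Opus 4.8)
The plan is to verify that the curve $C$ is ample via the refined Nakai–Moishezon–Kleiman criterion of \autoref{ss:AmpleCone}, and then apply \autoref{l:Deg2K3} to the linear system $|C|$ to obtain the double-cover structure, whose deck involution is the desired holomorphic involution. First I would set up the Gram matrix: with $H^2=4$, $C^2=2g-2=2$, and $H.C=e$, the lattice $\Pic X=\langle H,C\rangle$ has Gram matrix $[4\ e\ 2]$ and discriminant $8-e^2$, which is negative precisely because $e>4$; this confirms the signature is $(1,1)$, consistent with $\Pic X$ being a rank-$2$ Picard lattice of a K3 surface. Since $C^2=2>0$ and $C.H=e>0$, the class $C$ lies in the positive cone $\cP^+$.

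The core of the argument is to show $C.[R]>0$ for every smooth rational curve $R\subset X$, i.e. for every effective $(-2)$-class. Writing $R=aH+bC$, effectivity and the relation $R.H>0$ (for $H$ ample) combined with $R^2=-2$ pin down the possible $(a,b)$; here the main obstacle is that, unlike the nodal-quartic case of \autoref{l:AmpleConeS} where a concrete second $(-2)$-curve was exhibited, we only know $\Pic X$ abstractly, so I must bound the $(-2)$-classes purely lattice-theoretically. Solving $4a^2+2eab+2b^2=-2$ together with $4a+eb>0$, one checks that for $e>4$ any solution has $aH+bC$ with the sign of $a$ forced to be negative and $b$ positive (or the symmetric statement after applying the Weyl group), and one computes $C.R=e a+2b$; a short case analysis using $e>4$ shows this is positive for every effective root. (If it is cleaner, I would instead invoke \autoref{r:-2classcurve}: there are at most two rational curves up to linear equivalence, so it suffices to rule out that a $(-2)$-curve $R$ satisfies $C.R\le 0$ by intersecting with the already-known ample class $H$ and using the Hodge index theorem on the rank-$2$ lattice.) Together with $C^2>0$ and $C.H>0$, the three conditions of \cite[Corollary 8.1.6]{Huy16} give that $C$ is ample.

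Once $C$ is known to be ample, \autoref{l:Deg2K3} applies directly: since $C^2=2$, the linear system $|C|$ realises $X$ as a double cover of $\IP^2$ branched along a smooth sextic, and by \autoref{r:DoubleCoverInv} this double cover carries its deck transformation $\iota$, which is a holomorphic involution of $X$. This $\iota$ is by construction induced by the $|C|$-model, which is exactly the statement to be proved. The existence of such an $X$ in the first place — a smooth quartic containing a smooth genus-$2$ curve of degree $e$ with $\Pic X=\langle H,C\rangle$ — is guaranteed by \autoref{t:Mori}: for $g=2$ the condition $g<e^2/8$ reads $e^2>16$, i.e. $e\ge 5$, and the excluded pair $(5,3)$ does not occur since $g=2$, so the hypothesis $e>4$ is precisely what is needed.
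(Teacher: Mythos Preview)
Your overall strategy matches the paper's: verify $C$ is ample via the criterion in \autoref{ss:AmpleCone}, then invoke \autoref{l:Deg2K3} to get the double cover and its deck involution. The final paragraph on \autoref{t:Mori} is correct but extraneous, since the proposition assumes such an $X$ is given.

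The gap is in your core step. You assert that ``a short case analysis using $e>4$ shows $C.R>0$ for every effective root,'' but you do not carry it out, and it is not short as stated: the equation $4a^2+2eab+2b^2=-2$ typically has infinitely many integer solutions (the $(-2)$-classes form orbits under the infinite orthogonal group of the rank-$2$ hyperbolic lattice), so the constraint $4a+eb>0$ alone still leaves infinitely many candidates. Your parenthetical fallback via \autoref{r:-2classcurve} does cut this down to at most two $(-2)$-\emph{curves}, but you then have no way to identify which $(-2)$-classes those curves represent, so you still cannot conclude.

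The paper sidesteps this entirely with one observation you are missing: $C$ is not just a divisor class but an honest irreducible curve on $X$. Any $(-2)$-curve $D$ is also irreducible, and $D\neq C$ since $D^2=-2\neq 2=C^2$; hence $C.D\geq 0$ automatically. It then suffices to rule out $C.D=0$. Writing $D=aH+bC$, the condition $ea+2b=0$ gives $b=-ea/2$; substituting into $4a^2+2eab+2b^2=-2$ yields $a^2(e^2-8)=4$, which has no integer solution once $e\geq 5$. This two-line computation replaces your unfinished case analysis.
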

\begin{proof}

We show that $C$ is ample. We have $C^2=2g-2=2>0$ from the adjunction formula, hence it is in the positive cone. So we are left to check that $C$ has positive intersection with all $-2$-curves.
To this end, let $D$ be a $-2$-curve and write $D=aH+bC$ with $a,b\in \IZ$. Then we have 
\begin{equation}\label{eq:-2Div}
    -2=D^2=4a^2+2eab+2b^2.
\end{equation}
As $D$ and $C$ are both effective, we know that $C.D\geq 0$.
Assume $0=C.D=ea+2b$ and hence $b=-\frac{ea}{2}$.
Plugging this equality into~\eqref{eq:-2Div}, we obtain the equation
 $a^2(-8+e^2)=4$, which has no solutions for $e>4$.
This shows that $C.D \neq 0$, so we deduce that $C.D >0$.
Therefore, $C$ must be ample.

As $C$ is ample and of genus~$2$, the linear system $|C|$ induces a model of~$X$ as double cover of~$\IP^2$ ramified above a sextic, see~\autoref{l:Deg2K3} or~\cite[Remark 2.2.4]{Huy16}.
This means that $X$ has a double cover structure.
The double cover involution on~$X$ is holomorphic, proving the statement.
\end{proof}

After proving the existence of smooth quartics with Picard number equal to $2$ and a holomorphic involution, one might ask about the nature of this involution.
Let~$H$ be the hyperplane section of~$X$. 
Then~$|H|$ induces an embedding $\phi_H\colon X\hookrightarrow\IP^3$.
The involution~$\iota$ induces an isometry on $\Pic X$ preserving the ample cone.
This means that~$\iota^*H$ is still ample and still has self-intersection $4$, but it does not need to be equivalent to $H$.
It turns out that~$\iota^*H$ is linearly equivalent to~$H$ if and only if~$\iota$ is the restriction of some involution of~$\IP^3$.

\begin{lemma}
  Let $X$ be a smooth quartic in $\IP^3$ and let $\iota\colon X \rightarrow X$ be a holomorphic involution.
  Let~$H$ be a hyperplane section of the smooth quartic $X$.
  Then $\iota ^*H$ and $H$ are linearly equivalent if and only if there exists an involution $\tilde{\iota}\colon \IP^3 \rightarrow \IP^3$ such that $\tilde{\iota}|_X =\iota$.
\end{lemma}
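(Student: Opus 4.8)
The plan is to prove both implications directly. For the easy direction, suppose there is a linear involution $\tilde\iota\colon \IP^3\to\IP^3$ with $\tilde\iota|_X=\iota$. Then $\iota$ is the restriction of a projective linear map, so $\iota^*\cO_X(1)\cong\tilde\iota^*\cO_{\IP^3}(1)|_X$. But any automorphism of $\IP^3$ pulls back $\cO_{\IP^3}(1)$ to $\cO_{\IP^3}(1)$ (the Picard group of $\IP^3$ is $\IZ$ generated by $\cO_{\IP^3}(1)$ and the automorphism must send a hyperplane to a hyperplane), hence $\iota^*H\sim H$.

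For the harder converse, assume $\iota^*H\sim H$. The idea is to use that $|H|$ is very ample and induces the given embedding $\phi_H\colon X\hookrightarrow\IP^3$; since $\iota^*H\sim H$, the involution $\iota$ acts on the complete linear system $|H|=\IP(H^0(X,\cO_X(H)))$ via an automorphism $\iota^*$ of the vector space $H^0(X,\cO_X(H))\cong H^0(\IP^3,\cO_{\IP^3}(1))$ (this last isomorphism holds because a quartic surface is projectively normal, so the restriction map $H^0(\IP^3,\cO_{\IP^3}(1))\to H^0(X,\cO_X(1))$ is an isomorphism of $4$-dimensional spaces). First I would fix an isomorphism identifying $|H|$ with the space of hyperplanes in $\IP^3$ compatibly with $\phi_H$, so that $\iota^*$ on $H^0(X,\cO_X(H))$ becomes a linear automorphism $A$ of the $4$-dimensional space $V$ with $\IP^3=\IP(V^\vee)$. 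The dual (or inverse-transpose) of $A$ then defines an automorphism $\tilde\iota$ of $\IP^3$, and the functoriality of the map-to-projective-space associated to a linear system gives $\tilde\iota\circ\phi_H=\phi_H\circ\iota$, i.e. $\tilde\iota|_X=\iota$ after identifying $X$ with its image. Finally, since $\iota$ is an involution and $\phi_H$ is an embedding, $\tilde\iota^2$ fixes $X$ pointwise; as $X$ spans $\IP^3$, an automorphism of $\IP^3$ fixing a spanning set of points is the identity, so $\tilde\iota$ is an involution.

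The main obstacle I expect is making the identification $\iota^*\colon |H|\to|H|$ genuinely \emph{linear} and checking the compatibility $\tilde\iota\circ\phi_H=\phi_H\circ\iota$ carefully. Concretely one must: (i) choose a specific isomorphism $\lambda\colon \iota^*\cO_X(H)\xrightarrow{\sim}\cO_X(H)$ lifting $\iota^*H\sim H$, which induces a linear map $H^0(X,\cO_X(H))\to H^0(X,\cO_X(H))$ (well-defined up to scalar, which is all that matters projectively); (ii) invoke projective normality of the quartic to identify $H^0(X,\cO_X(H))$ with the space of linear forms on $\IP^3$; and (iii) track through the universal property of $\phi_H$ to see that the linear automorphism obtained this way really restricts to $\iota$ on $X$. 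Step (ii), projective normality of a smooth quartic K3 in $\IP^3$, is standard (it follows since $X$ is cut out by a single quartic and $H^1(\IP^3,\cO_{\IP^3}(n))=0$, or from Saint-Donat's results on K3 surfaces), so it can be cited; the genuine work is the bookkeeping in (i) and (iii), but no hard input beyond standard facts on linear systems on K3 surfaces is needed.
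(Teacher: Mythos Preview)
Your proposal is correct and follows essentially the same approach as the paper: both arguments use the action of $\iota$ on $H^0(X,\cO_X(H))\cong |H|$ to produce a linear automorphism of the ambient $\IP^3$ via the map associated to the complete linear system, and then verify that this automorphism restricts to $\iota$ on $X$. The only minor difference is in checking that $\tilde\iota^2=\id$: the paper computes this algebraically from $((\iota^2)^*)^\vee=\id$, whereas you argue geometrically that $\tilde\iota^2$ fixes the spanning surface $X$ pointwise; both are fine.
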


\begin{proof}
    If $\iota$ comes from a projective involution $\tilde\iota$, then it preserves the hyperplane section class, proving the implication from right to left.
    
    Now suppose that $\iota ^*H$ and $H$ are linearly equivalent. 
    Let $|H|$ denote the complete linear system 
    $$
    |H|:=\{D\in \Div X\; :\; D\sim H, \; D\geq0\}\; .
    $$ 
    This linear system gets the structure of a projective space by the bijection 
    $$
    \IP(H^0(X,H))\to |H|\; ; \;\;\; [f]\mapsto H+\text{div} f\; .
    $$
    The map associated to the linear system is then given by the map 
    $$
    \varphi_{|H|}\colon X\to |H|^\vee\; ; \;\;\; P\mapsto \{D\in |H| \; : \; P\in \text{Supp}(D)\}\; .
    $$ 
    The map $\iota$ induces a linear map $$H^0(X,H)\to H^0(X,\iota^*H)\; ; \;\;\; [f]\mapsto [f\circ \iota].$$ This map is an isomorphism and induces a projective linear isomorphism 
    $$
    \iota^*\colon |H|\to |\iota^* H|\; ; \;\;\; D\mapsto \iota^*(D).
    $$
    The projective dual of this map $(\iota^*)^\vee\colon |\iota^* H|^\vee\to |H|^\vee$ is given by sending a hyperplane $U\subset |H|$ to the hyperplane $(\iota^*)^{-1}(U)$. One can check that this gives the commutative diagram below.
    \[\begin{tikzcd}
        X\arrow[r]{}{\iota}\arrow[d, swap]{}{\varphi_{|\iota^* H|}} & X\arrow[d]{}{\varphi_{|H|}} \\
        {|\iota^* H|}^\vee \arrow[r]{}{(\iota^*)^\vee} & {|H|}^\vee
    \end{tikzcd}\]
    Now recall that, by assumption, $\iota ^*H$ and $H$ are linearly equivalent, which means that we can identify~$|\iota^*H|=|H|$.
    We can choose an isomorphism $\chi\colon \IP^3\xrightarrow{\sim} {|H|}^\vee$ such that the composition $\chi^{-1}\circ \varphi_{|H|}\colon X\to\IP^3$ is the inclusion $X\subset \IP^3$. Now define $\tilde{\iota}:=\chi^{-1}\circ (\iota^*)^\vee \circ \chi$. This gives the following commutative diagram.
    \[\begin{tikzcd}
        X\arrow[r]{}{\iota}\arrow[dd, swap, bend right]{}{\varphi_{|H|}}\arrow[d, hook] & X\arrow[dd, bend left]{}{\varphi_{|H|}}\arrow[d, hook] \\
        \IP^3 \arrow[r]{}{\tilde{\iota}}\arrow[d]{}{\chi} & \IP^3 \arrow[d]{}{\chi}\\
        {|H|}^\vee \arrow[r]{}{(\iota^*)^\vee} & {|H|}^\vee 
    \end{tikzcd}\]
    From the diagram we deduce that $\tilde{\iota}|_X=\iota$. Furthermore, one can deduce that $$\tilde{\iota}^2=(\chi^{-1}\circ (\iota^*)^\vee \circ \chi)^2=\chi^{-1}\circ ((\iota^*)^\vee)^2 \circ \chi =\chi^{-1}\circ ((\iota^2)^*)^\vee \circ \chi=\chi^{-1} \circ \id_{|H|^\vee}\circ \chi=\id_{\IP^3},$$
    which completes the proof.
\end{proof}

This leads to the question of whether there is a smooth quartic surface $X\subset \IP^3$ such that $\rho(X)=2$ and such that it admits a linear involution, i.e., an involution $\iota\colon \IP^3\to \IP^3$ with $\iota(X) =X$.
In the remainder of this section we show that such a quartic does not exist.

Let $\iota\colon \IP^3\to \IP^3$ be a linear involution.
Then $\iota$ can be represented as (the class of) a matrix in~$\PGL_4 (\IC)$, say $[I]$.
As $\iota$ is an involution, we may assume that $I$ is of the form
$$
I=
\begin{pmatrix}
\pm 1 & 0 & 0 & 0\\
0 & \pm 1& 0 & 0\\
0 & 0 & \pm 1& 0\\
0 & 0 & 0 & \pm 1
\end{pmatrix}.
$$
Let $(m,n)$ denote the signature of $I$.
As $\iota$ is not the identity, we have $n\geq 1$.
Also, as $I$ is only determined up to scalars, the case $(m,n)$ is equivalent to the case $(n,m)$.
We are then left with only two cases: $\sign I = (3,1)$ and $\sign I = (2,2)$.

\begin{remark}\label{r:SympSign}
    From~\cite[Lemma 15.1.4]{Huy16} it follows that if $\iota$ is symplectic, then $\sign I = (2,2)$.
\end{remark}

\begin{proposition}\label{p:InvPicNum}
Let $X\subset \IP^3$ be a quartic surface and suppose there is a linear involution $\iota\colon \IP^3\to\IP^3$ such that $\iota(X)=X$ and $\iota|_X\neq \id_X$.
Then $\rho (X) \geq 8$.
\end{proposition}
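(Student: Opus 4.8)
The plan is to exploit the linear involution $\iota$ to produce enough algebraic classes on $X$ beyond the hyperplane class, forcing $\rho(X)$ up. Concretely, I would split into the two cases for the signature of the representing matrix $I$, as already set up in the excerpt: $\operatorname{sign} I = (2,2)$ and $\operatorname{sign} I = (3,1)$.

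First consider $\operatorname{sign} I = (3,1)$. Here $\iota$ is non-symplectic by Remark~\ref{r:SympSign} (a symplectic linear involution must have signature $(2,2)$), so $\iota^*\omega_X = -\omega_X$ and by Theorem~\ref{t:NikulinInv} the fixed locus is a disjoint union of smooth curves. In suitable coordinates $\iota$ is the reflection $(x_0:x_1:x_2:x_3)\mapsto(x_0:x_1:x_2:-x_3)$, whose fixed locus in $\IP^3$ is the plane $\Pi = \{x_3=0\}$ together with the isolated point $p=(0:0:0:1)$. Since $\iota$ preserves $X$, the quartic $X$ is defined by an equation of the form $x_3^2 g_2 + x_3 \cdot 0 + g_4$ where... more precisely, $\iota$-invariance forces $X: g_4(x_0,x_1,x_2) + x_3^2 g_2(x_0,x_1,x_2)=0$ (no odd-degree-in-$x_3$ terms, as $\iota$ multiplies $x_3$ by $-1$), so $p\in X$ iff $g_4$... one has to be slightly careful: actually $\iota$-invariance up to scalar allows the equation to be a sum of an even part and an odd part, each eigenvectors, but only one eigenvalue survives, and since the hyperplane $x_3$ is odd while $x_0,x_1,x_2$ are even, the invariant quartics are either purely even $F(x_0,x_1,x_2)+x_3^2 q(x_0,x_1,x_2)+x_3^4 \lambda$ or purely odd $x_3 \cdot(\text{cubic in }x_0,x_1,x_2)+x_3^3 \cdot(\text{linear})$, the latter being singular, so we are in the even case. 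Then $C := X \cap \Pi$ is the plane quartic curve $\{x_3=0,\ F=0\}$, which is a smooth genus-$3$ curve lying in the fixed locus of $\iota$ on $X$, and $C$ has class $H$ but that is not new. The point is rather to run the Picard-number bound from Theorem~\ref{t:NikulinInv}(3): with $C$ of genus $3$ in the fixed curve, $\rho(X)\geq 11 - p_a(C) + k = 11 - 3 + k = 8+k \geq 8$ once we check the fixed locus is nonempty of the right shape; if the fixed locus is empty or two elliptic curves we are in cases (1)–(2) and $\rho(X)\geq 10$. So this case gives $\rho(X)\geq 8$.

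Next consider $\operatorname{sign} I = (2,2)$, say $\iota:(x_0:x_1:x_2:x_3)\mapsto(x_0:x_1:-x_2:-x_3)$. The fixed locus in $\IP^3$ is the disjoint union of two lines $L_1=\{x_2=x_3=0\}$ and $L_2=\{x_0=x_1=0\}$. Since $\iota$ preserves $X$ and has finite order, $X$ is smooth at the fixed points, and $X\cap L_i$ is either all of $L_i$ or a finite set; a smooth quartic cannot contain both $L_1$ and $L_2$ with the invariance forced here unless... in any case $\iota$ restricted to $X$ has fixed locus $F = (L_1\cap X)\sqcup(L_2\cap X)$, which is zero-dimensional, hence $F$ is a finite set of isolated points. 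If $\iota|_X$ is symplectic, Theorem~\ref{t:NikulinInv} gives $\rho(X)\geq 9$ directly. If $\iota|_X$ is non-symplectic with isolated fixed points, this is impossible by Theorem~\ref{t:NikulinInv} (a non-symplectic involution has no isolated fixed points — its fixed locus is empty or a disjoint union of curves), so in fact $\iota|_X$ must be symplectic in the $(2,2)$ case, and $\rho(X)\geq 9\geq 8$.

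The main obstacle I anticipate is the bookkeeping in the non-symplectic $(3,1)$ case: one must pin down precisely the shape of the $\iota$-invariant defining equation (ruling out the singular "odd" family so that $X$ smooth forces the "even" family), identify the fixed locus on $X$ as the plane section curve $C$ together with possibly the isolated point $p$, verify $p\notin X$ in the smooth even case (so that $F$ is a single smooth genus-$3$ curve, landing us in case (3) with $k\geq 0$), and then read off $\rho(X)\geq 11-3+k\geq 8$. Alternatively — and this may be cleaner — one can avoid the case analysis on the fixed locus entirely by directly constructing an extra algebraic class: in the $(3,1)$ even model, the conic $Q:\{x_3=0\}$ is tangent... rather, the plane $\Pi$ cuts out a curve whose components give new classes, or one uses that $X\cap\{q(x_0,x_1,x_2)=0, x_3=\text{(branch)}\}$ splits. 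I would first try the Nikulin-bound route as above since it is uniform, and fall back to explicit curve constructions only if the fixed-locus geometry turns out to be delicate.
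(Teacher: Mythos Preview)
Your overall strategy matches the paper's: split on $\operatorname{sign} I\in\{(3,1),(2,2)\}$ and feed the fixed locus of $\iota|_X$ into Nikulin's classification (Theorem~\ref{t:NikulinInv}). The $(3,1)$ case is handled correctly; the paper additionally lists the degenerate possibilities for the plane quartic section (cubic $+$ line, two conics, etc.), but in each of those the Nikulin bound only goes up, so your reading with a smooth genus-$3$ curve already gives the sharp $\rho\geq 8$.

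There is, however, a genuine gap in your $(2,2)$ analysis. You assert that $F=(L_1\cap X)\sqcup(L_2\cap X)$ is zero-dimensional and conclude that $\iota|_X$ must be symplectic, but this is false: a smooth $\iota$-invariant quartic \emph{can} contain both fixed lines. Concretely, if the defining polynomial $f$ lies in the $(-1)$-eigenspace of $\iota^*$ (i.e.\ every monomial has odd total degree in $x_2,x_3$), then $f$ vanishes identically on both $L_1=\{x_2=x_3=0\}$ and $L_2=\{x_0=x_1=0\}$, and such $f$ can be chosen smooth. Precisely in this eigenvalue-$(-1)$ case one computes $\iota^*\omega_X=(\det I)\cdot\epsilon^{-1}\,\omega_X=-\omega_X$, so $\iota|_X$ is non-symplectic and your dichotomy collapses. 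The fix, which is what the paper does, is immediate from Nikulin: if $\iota|_X$ is non-symplectic then $F$ has no isolated points, so among the three possibilities ``8 points / 4 points $+$ one line / two lines'' only the last survives; then $F$ consists of two disjoint smooth rational curves and Theorem~\ref{t:NikulinInv}(3) with $p_a(C)=0$, $k=1$ yields $\rho(X)\geq 12$. Thus the $(2,2)$ case really splits into symplectic ($\rho\geq 9$) and non-symplectic-with-both-lines ($\rho\geq 12$), and the minimum $8$ comes from the $(3,1)$ case as you found.
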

\begin{proof}
As above, let $\iota$ be represented by a matrix $I$ such that either $\sign I$ equals $(3,1)$ or $(2,2)$.

First assume that $\sign I = (3,1)$.
Then, by~\autoref{r:SympSign}, we know that $\iota$ is non-symplectic.
Choose the coordinates of $\IP^3$ such that $(1,0,0,0)$ is a basis of the eigenspace associated to the eigenvalue~$-1$ of $I$.
Then $X$ can be defined by a polynomial $f$ of the form
\begin{equation*}
f=x_0^4+x_0^2f_2+f_4,
\end{equation*}
with $f_i=f_i(x_1,x_2,x_3)$ homogeneous polynomials of degree $i$.
Then the fixed locus of~$\iota$ in~$\IP^3$ is $$\Fix \iota = \{x_0=0\} \cup \{(1:0:0:0)\}.$$
Notice that $(1:0:0:0)\notin X$.
So $\Fix\iota \cap X = \{f=x_0=0\}$ is the hyperplane section $H$ of $X$ cut by the plane $\{x_0=0\}$.
Then the irreducible components of $H$ can be
\begin{enumerate}
    \item a quartic curve, or
    \item a cubic curve and a line, or
    \item two conics, or
    \item a conic and two lines, or
    \item four lines.
\end{enumerate}
By~\autoref{t:NikulinInv}, it follows that $\rho (X)\geq 8,11,12,13,14$, respectively.

Now assume that $\sign I =(2,2)$.
If $\iota$ is symplectic, then $\rho (X)\geq 9$ by~\autoref{t:NikulinInv}.
So assume that~$\iota$ is non-symplectic.
Choose $x_0,x_1,x_2,x_3$ to be homogeneous coordinates of $\IP^3$ such that~$(1,0,0,0)$ and $(0,1,0,0)$ form a basis of the eigenspace associated to the eigenvalue $-1$ for $I$.
Then the fixed locus of $\iota$ in $\IP^3$ is  the union of two disjoint lines, namely the lines $\{x_0=x_1=0\}$ and  $\{x_2=x_3=0\}$.
Two disjoint lines intersect $X$ in
\begin{enumerate}
    \item eight points,
    \item four points and one line (i.e., one of them is contained in $X$), or
    \item two disjoint lines (i.e., they are both contained in $X$).
\end{enumerate}
By~\autoref{t:NikulinInv}, we know that only the last case is possible and it implies $\rho (X)\geq 12$.

The statement follows.
\end{proof}

From the above we deduce that 
in the case of \autoref{p:involution}, we have that $\iota^*H\neq H$. In the next lemma we show what $\iota^*H$ will be in this case. This will be useful in the example in the next section.

\begin{lemma}\label{r:X4HolomorphicInvolution}
    Let $X$ be a quartic surface with an involution $\iota\colon X\to X$ and $\Pic X=\langle H,C\rangle$ as in~\autoref{p:involution}. Then the equalities $\iota^*C=C$ and $\iota^* H=-H+eC$ hold.
\end{lemma}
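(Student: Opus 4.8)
The plan is to exploit that $\iota^*$ is a lattice isometry of $\Pic X=\langle H,C\rangle$ preserving the ample cone, together with the explicit description of $\iota$ as a deck transformation and the earlier structural results on linear involutions of quartics.

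First I would settle $\iota^*C=C$. By hypothesis (as in~\autoref{p:involution}) the involution $\iota$ is the double cover involution attached to the base-point-free ample system $|C|$, so if $\pi\colon X\to\mathbb{P}^2$ denotes the associated degree-two morphism, then $\pi\circ\iota=\pi$ and $\mathcal{O}_X(C)=\pi^*\mathcal{O}_{\mathbb{P}^2}(1)$. Hence $\iota^*C=\iota^*\pi^*\mathcal{O}_{\mathbb{P}^2}(1)=(\pi\circ\iota)^*\mathcal{O}_{\mathbb{P}^2}(1)=\pi^*\mathcal{O}_{\mathbb{P}^2}(1)=C$, which is the first claimed equality.

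Next I would determine $\iota^*H$ by a direct lattice computation. Write $\iota^*H=aH+bC$ with $a,b\in\mathbb{Z}$; recall that the Gram matrix of $\langle H,C\rangle$ is $\left(\begin{smallmatrix}4&e\\e&2\end{smallmatrix}\right)$, since $H^2=4$, $C^2=2g-2=2$, and $H.C=e$. Pairing with $C=\iota^*C$ and using that $\iota^*$ preserves the intersection form gives $e=(\iota^*H).(\iota^*C)=(aH+bC).C=ae+2b$, hence $b=e(1-a)/2$. Imposing $(\iota^*H)^2=H^2=4$, i.e. $2a^2+abe+b^2=2$, and substituting $b=e(1-a)/2$, the $e$-dependent terms collapse and one is left with $(a^2-1)(8-e^2)=0$. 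Since $e>4$ we have $8-e^2\neq 0$, so $a=\pm1$, giving either $(a,b)=(1,0)$, i.e.\ $\iota^*H=H$, or $(a,b)=(-1,e)$, i.e.\ $\iota^*H=-H+eC$.

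It remains to rule out $a=1$. If $\iota^*H=H$, then by the lemma characterising when a holomorphic involution of a smooth quartic is the restriction of an involution of $\mathbb{P}^3$ (proved just above~\autoref{r:SympSign}), $\iota$ would extend to a linear involution $\tilde\iota\colon\mathbb{P}^3\to\mathbb{P}^3$ with $\tilde\iota|_X=\iota$; but then~\autoref{p:InvPicNum} forces $\rho(X)\geq 8$, contradicting $\rho(X)=\operatorname{rk}\langle H,C\rangle=2$. Hence $a=-1$ and $\iota^*H=-H+eC$. The main (and essentially only) obstacle is this exclusion of $a=1$, which is where the earlier results really enter; the rest is routine bookkeeping. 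As a consistency check one may note $(-H+eC)^2=4-2e^2+2e^2=4$ and $(-H+eC).C=-e+2e=e$, and that $-H+eC$ automatically lies in the ample cone because $\iota^*$ preserves it.
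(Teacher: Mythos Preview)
Your proof is correct and follows essentially the same approach as the paper: both argue that $\iota^*C=C$ from the double-cover description, solve the two isometry constraints $(\iota^*H)^2=4$ and $\iota^*H.C=e$ to reduce to the two solutions $(a,b)=(1,0)$ or $(-1,e)$, and then exclude the trivial solution via the linear-involution lemma together with \autoref{p:InvPicNum}. Your write-up is somewhat more explicit in the algebra (deriving $(a^2-1)(8-e^2)=0$) and in the justification of $\iota^*C=C$, but the strategy is identical.
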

\begin{proof}
    As $C$ has genus $2$ we get $C^2=2$. 
    Then $C$ induces the involution of $\Pic X$ defined by
    \[
    \iota_C\colon \alpha \to -\alpha +(\alpha . C)C\; .
    \]
    It is easy to see that this map is indeed an isometry of order $2$ that fixes $C$.
    As $\rank \Pic X=2$,
    looking at the action of $\iota_C$ on $\langle C \rangle$ and  $C^\perp$ it is easy to see that
     this is the unique isometry of order $2$ that fixes $C$.
    The double cover involution $\iota$  given by the linear system $|C|$ induces an isometry $\iota^*$ of order two on $\Pic X$
    which preserves the class of $C$, hence $\iota^*=\iota_C$.
    The statements then follow immediately, recalling that $e=C.H$ is the degree of $C$ by definition.
\end{proof}

\section{A quartic with Picard number 2 and  polarizations of degree 2 and 8}
\label{s:an-example}

In this section, we give an example of a smooth quartic surface in $\IP^3$ defined over $\IQ$ with Picard number equal to 2 admitting an involution. We also show that this surface is isomorphic to an intersection of three quadrics in $\IP^5$. Computations pertaining to this example can be found in the \texttt{Magma} file \texttt{ExampleQuartic}.

\begin{definition}
    \label{d:WimQuartic}
    Define $X_4\subset \IP^3(x,y,z,w)$ to be the smooth quartic surface defined by
    $$
    -x(x+z-w)(xw - yz)+z(x+z)(xy - z^2)+(xy + w^2)(y^2 - zw)=0\; .
    $$
\end{definition}
\begin{lemma}
The surface $X_4$ contains the following curves:
$$
\begin{array}{cc}
   C\colon
\begin{cases}
    xw - yz =0\\
    xy^2 + x^2z - z^3 - xyw + yw^2 - w^3  = 0\\
    xz^2 + z^3 + xyw + w^3 = 0
\end{cases}
\;\;\textrm{ and }\;\;  &  D\colon
\begin{cases}
    xw - yz&=0\\
    xy - z^2&=0\\
    y^2 - zw&=0
\end{cases}
\;\;.
\end{array}
$$
The curve $C$ has degree $5$ and genus $2$;
the curve $D$ is a twisted cubic, hence it has degree $3$ and genus $0$.
\end{lemma}
\begin{proof}
This follows from direct computations in \texttt{Magma}.
\end{proof}

\begin{proposition}\label{p:picX4}
The Picard lattice~$\Pic X_{4,\IC}$, is generated by the class of~$C$ and the hyperplane section and is isometric to the lattice $[4\; 5\; 2]$. Moreover, $\Pic X_{4,\IC}$ equals~$\Pic X_4$.
\end{proposition}
\begin{proof}
Let $H$ be the hyperplane section class and, by slight abuse of notation, 
let $C$ also denote the class of the curve $C$ inside $\Pic X_4$. The classes $H,C\in\Pic X_{4,\IC}$ generate a sublattice isometric to $[4\; 5\; 2]$, showing that the Picard number of $X_4$ is at least $2$.
By looking at the reduction of~$X_4$ modulo~$2$,
a verification with \texttt{Magma} gives that the Picard number of $X_4$ is as most $2$, proving that it equals 2.
As $\det [4\; 5\; 2] = -17$ is square-free and $H$ and $C$ are defined over $\IQ$, we conclude that~$\Pic X_4=\Pic X_{4,\IC}\cong [4\; 5\; 2]$.
\end{proof}

\begin{corollary}\label{c:X4involution}
    The surface $X_4$ is a realization of the surface in \autoref{p:involution} with $e=5$. In particular, the curve $C$ is ample and $X_4$ admits a holomorphic involution $\iota\colon X_4\to X_4$ such that $\iota^*(C)=C$ and $\iota^*(H)=-H+5C$ where $H$ denotes the hyperplane section class.
\end{corollary}
\begin{proof}
    This corollary follows from combining the result of \autoref{p:picX4} with \autoref{p:involution} and \autoref{r:X4HolomorphicInvolution}.
\end{proof}

\begin{remark}\label{r:WimConstruction}
    Let us explain how to produce examples such as $X_4$ above, i.e. quartic surfaces with Picard lattice isometric to $[4\; 5\; 2]$.
    \begin{enumerate}
        \item Construct curves $C$ and $D$ in $Q\subset \IP^3$, where $Q$ is the quadric defined by $xy=wz$, with the following properties.
        \begin{enumerate}
            \item On $\IP^1\times \IP^1$, define coordinates $x_0,x_1$ and $y_0,y_1$. Define the curve $D'\subset \IP^1\times \IP^1$ of bidegree $(2,1)$ by $g:=x_0^2y_1-x_1^2y_0$. This is a curve of genus 0. 
            \item Define another random smooth integral curve $C'\subset \IP^1\times\IP^1$ by $f=0$, where $f$ is some polynomial of bidegree $(2,3)$. This is a curve of genus $2$.
            \item The union $C'\cup D'$ will be defined by an equation of bidegree $(4,4)$.
            \item The image $C$ of $C'$ under the Segre embedding $\IP^1\times\IP^1 \to Q \subset \IP^3$ given by $$((x_0:x_1),(y_0:y_1))\mapsto (x_0y_0:x_0y_1:x_1y_0:x_1y_1)$$ has degree $5$ and genus $2$. Analogously, the image $D$ of $D'$ under the Segre embedding is a twisted cubic.
        \end{enumerate}
        \item The union of the two curves $C'\cup D'$ is defined by an equation of bidegree (4,4) and so the image of this union in $Q$ is given by the zero set of a quartic polynomial $h$. Hence, $h$ defines a quartic hypersurface $X\subset \IP^3$ containing both curves $C$ and $D$.
        \item The holomorphic involution on $X$ is induced by the double cover model given by the linear system $|C|$.
        \item It remains to check that the surface $X$ is smooth and that the Picard number equals 2. Generically this will indeed be the case.
    \end{enumerate}
    
    The file \texttt{ConstructionQuartic} at \href{https://github.com/danielplatt/quartic-k3-with-involution}{github.com/danielplatt/quartic-k3-with-involution} can be used to randomly generate candidates for $f$ and check if the resulting quartic K3 surface has Picard number equal to~$2$. This generates many examples of quartic K3 surfaces with commuting holomorphic and anti-holomorphic involutions and Picard number equal to~$2$. 
    
    In our case, we got the curve $C'\subset \IP^1\times\IP^1$ defined by the equation $f=0$, where
    $$f := x_0x_1y_0^3 + x_1^2y_0^3 + x_0^2y_0y_1^2 + x_1^2y_1^3.$$
    One can check that the image of $C'\cup D'$ under the Segre embedding to~$\IP^3$ is given by the equations $xw=yz$ and
    $-xy^3 + xz^3 + z^4 - x^3w - x^2zw + x^2w^2 - y^2w^2 + zw^3=0,$
    hence returning the defining equation of $X_4$, cf.~\autoref{d:WimQuartic}. 

    The anonymous referee noted that instead of the construction above, one can also construct the quartic with the curves as follows. 
    One can start with a smooth quartic defined by an equation of the form 
    $$
    q_1(xw - yz) + q_2(xy - z^2) + q_3(y^2 - zw)=0,
    $$ 
    where $q_1,q_2,q_3$ are homogeneous quadratic polynomials.
    Then the quartic by definition contains~$D$, and we can find a curve~$C$ in the other component of the hypersurface defined by $xw=yz$ (or $xy=z^2$ or $y^2=zw$). 
    Then instead of checking that $X$ is smooth, one needs to verify that the curve~$C$ is irreducible and smooth. 
    We added an alternative code \texttt{ConstructionQuarticAlt} which follows this construction. 
\end{remark}

\begin{remark}\label{r:inv5C-H}
  From the construction in \autoref{r:WimConstruction}, it follows that $2H\sim C+D$. This means that~$5C-H\sim 9H-5D$. By \autoref{c:X4involution} the hyperplane section class $H$ gets sent to the class of~$9H-5D$. This means that we can choose an isomorphism $|9H-5D|^\vee\xrightarrow{\sim}\IP^3$ in such a way that the composition $X\to |9H-5D|^\vee\xrightarrow{\sim}\IP^3$ equals the composition $X\xrightarrow{\iota} X\subset \IP^3$. 
  
  Next we take the curve $w=0$ on $X_4$ as a representative for $H$. Observe that with this choice the vector space $H^0(X,\cO_{X_4}(9H-5D))$ consists of rational functions~$\frac{f}{w^9}$, where $f$ is a polynomial of degree 9 vanishing at least five times along~$D$. We deduce that the morphism $\iota$ can be given by polynomials of degree 9. 
  Using \texttt{Magma}, we find that the involution $\iota\colon X\to X$ is given by $$(x:y:z:w)\mapsto (f_0:f_1:f_2:f_3),$$ where 
  
  {\Tiny \begin{align*}
  f_0&:=x^4y^5 - 
    5x^3y^4z^2 + 2x^2y^5w^2 + 10x^2y^3z^4 - 8x^2y^3zw^3 - x^2y^2zw^4 + x^2y^2w^5 + 6x^2yz^2w^4 - x^2z^6w + x^2z^2w^5 - x^2zw^6 + xy^8 \\
    & \hspace{0.5cm} - 7xy^6zw - xy^5w^3 + 15xy^4z^2w^2 + xy^3zw^4 - 10xy^2z^6 - 6xy^2z^3w^3 - xz^7w + xz^6w^2 - 3xz^4w^4 + 2y^5z^3w - y^4w^5 - y^3z^6 \\
    & \hspace{0.5cm} - 8y^3z^4w^2 + y^2z^4w^3 + 2y^2zw^6 + 5yz^8 + 6yz^5w^3 + z^9 - z^5w^4 - z^2w^7\; , \\
  f_1&:=x^3y^6 + 
    x^3y^5z + x^2y^7 + x^2y^6w - 4x^2y^5z^2 - 5x^2y^4z^3 - x^2y^4z^2w + 2x^2y^4zw^2 - 2x^2y^4w^3 - 3x^2y^3z^3w + 9x^2y^3z^2w^2  \\
    & \hspace{0.5cm} + 2x^2y^3zw^3 - 2x^2y^3w^4 - 8x^2y^2z^4w - 3x^2y^2z^3w^2 + 7x^2y^2z^2w^3 - 4x^2y^2zw^4 - 3x^2yz^4w^2 + 5x^2yz^3w^3 + x^2yz^2w^4 \\
    & \hspace{0.5cm} - x^2yzw^5 + x^2yw^6
    - 2x^2z^5w^2 + x^2z^3w^4 - x^2z^2w^5 - xy^7z + xy^7w - 4xy^6z^2 - xy^6zw + 2xy^6w^2 - 3xy^5z^2w + 4xy^5zw^2 \\
    & \hspace{0.5cm} + xy^5w^3 + 5xy^4z^4  - 5xy^4z^3w + 2xy^4zw^3 - xy^4w^4 + 10xy^3z^5 - xy^3z^3w^2 + xy^3z^2w^3 - xy^3zw^4 - xy^3w^5 - 5xy^2z^5w \\
    & \hspace{0.5cm} - 8xy^2z^4w^2 + 5xyz^6w + xyz^3w^4 + xyzw^6 - xz^6w^2 + xz^5w^3 - 2xz^3w^5 + y^9 - 5y^7zw - y^6zw^2 + y^6w^3 - y^5z^4 + 5y^5z^2w^2 \\
    & \hspace{0.5cm} - y^5zw^3 + y^5w^4 + y^4z^5 - y^4z^4w - 2y^4z^2w^3 + 3y^4zw^4 + 3y^3z^6 + 3y^3z^5w - y^3z^4w^2 - 9y^3z^3w^3 + y^3z^2w^4 - y^3w^6 - 2y^2z^7 \\
    & \hspace{0.5cm} + 3y^2z^6w - 4y^2z^5w^2 + 3y^2z^3w^4 - 3y^2z^2w^5 + 7yz^7w - yz^6w^2 - yz^5w^3 + 9yz^4w^4 - yz^2w^6 + yzw^7 + z^8w - z^6w^3 - z^3w^6\; , \\
  f_2&:=-x^4y^5 + 
    5x^3y^4z^2 + x^2y^7 - x^2y^5w^2 + 2x^2y^4z^2w + 2x^2y^4zw^2 - 3x^2y^4w^3 - 10x^2y^3z^4 - 4x^2y^3z^3w + 7x^2y^3z^2w^2 \\
    & \hspace{0.5cm} - 3x^2y^3zw^3 - 6x^2y^2z^4w + 7x^2y^2z^3w^2 + x^2y^2z^2w^3 + x^2y^2zw^4 + x^2y^2w^5 - 4x^2yz^5w + 2x^2yz^3w^3 - 2x^2yz^2w^4 + x^2z^4w^3 \\
    & \hspace{0.5cm} - x^2z^2w^5 - xy^7z - xy^7w - 4xy^6z^2 + 3xy^6zw + xy^6w^2 - xy^5z^3 + 11xy^5zw^2 - xy^5w^3 - xy^4z^3w + 2xy^4z^2w^2 - xy^4zw^3 \\
    & \hspace{0.5cm} - xy^4w^4 - 9xy^3z^4w - 10xy^3z^2w^3 + 10xy^2z^6 + xy^2z^3w^3 + xy^2zw^5 - xyz^5w^2 + 2xyz^3w^4 + xz^7w + y^8z - 3y^6z^2w \\
    & \hspace{0.5cm} - y^6zw^2 + y^6w^3 - 4y^5z^2w^2 + 3y^5zw^3 - y^4z^5 - y^4z^4w - 3y^4z^3w^2 + y^4z^2w^3 - y^4w^5 + 4y^3z^6 - 3y^3z^5w + 4y^3z^3w^3 \\
    & \hspace{0.5cm} - y^3z^2w^4 + 6y^2z^7 + 3y^2z^6w - y^2z^5w^2 + 6y^2z^4w^3 - y^2z^2w^5 + y^2zw^6 - yz^8 - 2yz^6w^2 - 2yz^3w^5 - z^7w^2\; ,\\
    f_3&:=-x^3y^6 - 
    x^2y^6z + x^2y^6w + 3x^2y^5z^2 + x^2y^5w^2 - 3x^2y^4z^2w - 9x^2y^3z^3w - 3x^2y^3z^2w^2 + 3x^2y^3zw^3 - x^2y^3w^4 - 7x^2y^2z^3w^2 \\
    & \hspace{0.5cm} + 14x^2y^2z^2w^3 + 2x^2y^2zw^4 - 2x^2y^2w^5 - 9x^2yz^4w^2 - 2x^2yz^3w^3 + 8x^2yz^2w^4 - 6x^2yzw^5 - x^2z^4w^3 + 2x^2z^3w^4 + x^2z^2w^5 \\
    & \hspace{0.5cm} - x^2zw^6 + x^2w^7 - 3xy^7z - 3xy^6zw + 4xy^5z^3 - xy^5zw^2 - 2xy^4z^2w^2 + 6xy^4zw^3 + xy^4w^4 - 12xy^3z^3w^2 + 8xy^3zw^4 \\
    & \hspace{0.5cm} - xy^3w^5 + 15xy^2z^5w - xy^2z^3w^3 - xy^2zw^5 - xy^2w^6 - 4xyz^5w^2 - 3xyz^4w^3 - 6xyz^2w^5 + 2xz^6w^2 + xz^3w^5 + xzw^7 \\
    & \hspace{0.5cm} - y^8w + y^6z^3 + 4y^6zw^2 - 3y^5zw^3 - 2y^4z^5 - 3y^4z^4w - 15y^4z^2w^3 - y^4zw^4 + y^4w^5 - y^3z^6 + 3y^3z^5w + y^3z^2w^4 \\
    & \hspace{0.5cm} + 6y^3zw^5 + 12y^2z^6w + 4y^2z^5w^2 - y^2z^4w^3 + 10y^2z^3w^4 + y^2z^2w^5 - y^2w^7 + 2yz^6w^2 - 6yz^5w^3 + 2yz^3w^5 - 6yz^2w^6 \\
    & \hspace{0.5cm} + 2z^7w^2 - z^6w^3 - z^5w^4 + 3z^4w^5 - z^2w^7 + zw^8 \; .
    \end{align*}}
\end{remark}

The surface $X_4$ also admits other models, given by the linear systems of ample divisors.
For example:
\begin{enumerate}
    \item as $C$ has genus $2$, the linear system $|C|$ returns a model of $X_4$ as double cover of $\IP^2$ ramified along a smooth sextic, call it $X_2'$;
    \item $|3H-C|$ returns a model of $X_4$ as intersection of three quadrics in $\IP^5$, call it $X_8$.
\end{enumerate}
Both examples are studied in the remainder of the section.

\subsection{Double cover.}
As the curve $C$ has genus $2$, the space of sections $H^0(X_4,\cO_{X_4}(C))$ has dimension $3$.
A basis of this space is explicitly computed in the provided \texttt{Magma} file, obtaining for example the basis
$$
1\, ,\; \ \frac{xy - z^2}{xw - yz}\, ,\; \frac{y^2 - zw}{xw - yz}\; .
$$
Then $C$ induces the double cover map $\phi_C\colon X_4\to \IP^2$  sending
$$X_4\ni (x:y:z:w)
\mapsto (xy - z^2: xw - yz: y^2 - zw)\in\IP^2\, .
$$
Giving $\IP^2$ the coordinates $s,t,u$, the branch locus $B$ of $\phi_C$ is given by $f=0$ with $$
f := s^6 - 2s^5t + s^4t^2 - 2s^3t^3 - 2s^2t^4 + t^6 + 4s^3t^2u - 4t^5u - 4s^4u^2 - 4st^3u^2 + 4t^4u^2 - 4t^2u^4 - 4su^5.
$$
In particular, the model $X_2'$ in $\IP(1,1,1,3)$ with coordinates $x',y',z',w'$ is given by the equation $w'^2=f(x',y',z')$. 
In this model, $\iota$ is represented by the double cover involution
$$
\iota_2\colon (x':y':z':w')\mapsto (x':y':z':-w')\; .
$$

\begin{remark}
  An isomorphism $\phi\colon X_4\to X_2'$ between the models $X_4$ and $X_2'$ can be found in the \texttt{Magma} file \texttt{ExampleQuartic}. 
  In the same file, we give an explicit formula for this isomorphism and its inverse. Moreover, the composition of $\phi^{-1}\circ \iota_2\circ \phi$ exactly gives a formula for the involution $\iota\colon X_4\to X_4$, which agrees with the formula we found in \autoref{r:inv5C-H}.
\end{remark}

\subsection{Intersection of three quadrics.}
\label{subsection:intersection-of-three-quadrics}
Set $K:=3H-C$. Note that $K^2=8$. Next, we show that it is ample by showing that it has positive intersection with the~$-2$-curves on~$X_4$. The two~$-2$-curves on~$X_4$ are given by $D\sim 2H-C$ and $\iota^*(D)\sim \iota^*(2H-C)=-2H+9C$. A direct verification shows that $K.D=1$ and $K.\iota^*(D)=103$, from which we deduce that~$K$ is ample. In particular, this means that~$K$ gives a polarization of~$X_4$ of degree~$8$.

In the provided \texttt{Magma} file, we compute a basis of $H^0(X_4,K)$ 
inducing an embedding $\phi_K\colon X_4\hookrightarrow \IP^5$,
where the image is the surface~$X_8\subseteq \IP^5$ defined by the following equations:
$$
X_8\colon
\begin{cases}
    &x_1^2 + x_0x_2 + x_2x_3 + x_1x_4 - x_2x_4 - x_3x_4 - x_1x_5 = 0\, ,\\
    &x_2^2 + x_2x_3 + x_1x_4 + x_4^2 - x_0x_5 - x_3x_5 = 0\, ,\\
    &x_1x_3 - x_0x_4 - x_3x_4 - x_5^2 = 0 \, .
\end{cases}
$$

\begin{remark}
  The map $\phi_K$ \emph{straightens} the twisted cubic $D$ to the line in $\IP^5$ defined by the equations $x_1=x_2=x_4=x_5=0$.
\end{remark}

The lattice $[4\; 5\; 2]$ does not represent $6$, i.e., there is no element with square $6$: if $6$ was represented, then there would be some integer solutions~$x,y\in \IZ$ with $4x^2+10xy+2y^2=6$; modulo 3 the only solution of this equation is the trivial solution $x=y=0$, which would mean that 3 divides both $x$ and $y$, but then $9\mid 6$, giving a contradiction. It follows that~$X$ has no polarization of degree 6 and in particular it cannot be embedded into $\IP^4$ as an intersection of a quadric and a cubic.
  
In fact, finding a K3 surface of degree $4$ with a curve of genus $2$ and a divisor with self-intersection~$6$ is less trivial than one might think.
Let $H,C$ denote the generators of the lattice $[4\; d\; 2]$, with~$H^2=4$ and $C^2=2$.
Then the smallest $d\geq 6$ for which there exists a divisor $K=aH+bC$  with $|a|,|b|\leq 1000$ and $K^2=6$ is $d=9$, for which we have: $(a,b)=(\pm 22, \mp 5), (\pm 22, \mp 193)$.
\autoref{t:Mori} implies the existence of a smooth quartic $X$ with Picard lattice isomorphic to $[4\; 9\; 2]$ generated by curves $H,C$.

\section{An example of a K3 surface of degree 6 with Picard number 2}
\label{s:another-example}
In this section,
we give an example of an intersection of a quadric and a cubic in $\IP^4$ defined over~$\IQ$ having Picard number 2 and admitting a holomorphic involution.  Computations on this example can be found in the \texttt{Magma} file \texttt{ExampleK3degree6}.

\begin{definition}\label{d:Degree6}
Let $X_6\subset \IP^4$ be the surface defined by $l_1g_1+l_2g_2+l_3g_3=0$ and $x_4^2=f$, where
\begin{align*}
    g_1&=x_0x_3-x_1x_2; & g_2&=x_0x_2-x_1^2; & g_3&=x_1x_3-x_2^2;\\
    l_1&=2x_0 + x_1 + x_2 + x_4 ; & l_2&=x_3 + x_4 ; & l_3&=x_1 + 2x_2 + 2x_3 + x_4;
\end{align*}
\[
f=2x_0^2 + x_1^2 + x_0x_2 + x_1x_2 + x_2^2 + x_1x_3  + 2x_2x_3 + x_3^2\, .
\]

\vspace{4pt}
\noindent
We define $C_6\subset \IP^4$ to be the curve defined by $x_4^2=f$ and $g_1=g_2=g_3=0$.
\end{definition}

\begin{lemma}\label{l:ExampleDegreeSix}
The surface $X_6$ is a K3 surface of degree $6$ and contains the smooth curve $C_6$.
This curve has degree $6$ and genus $2$.
\end{lemma}
\begin{proof}
It is clear from the equations that $X_6$ contains the curve $C_6$. In order to prove that $X_6$ is a K3 surface it is enough to check that it is smooth.
This and the other statements are checked by direct computations, which can be found in the \texttt{Magma} file.
\end{proof}

\begin{proposition}
The Picard lattice~$\Pic X_{6,\IC}$ equals $\Pic X_6$ and is isometric to $[6\; 6\; 2]$.
\end{proposition}
\begin{proof}
By looking at the reduction modulo $7$ of this model, we obtain that the Picard number of~$X_6$ is at most $2$.
As the hyperplane section $H$ and the curve $C_6$ generate a lattice isometric to $[6\; 6\; 2]$, we conclude that $\rho (X_6)=2$. The discriminant of this lattice is $-24=-2^3\cdot 3$. If an even lattice $[2a \; b \; 2c]$ would have discriminant $4ac-b^2=-6$, then $b^2\equiv 2 \bmod{4}$, which is impossible.
We deduce that there are no even lattices of rank $2$ and discriminant $-6$, hence $\Pic X_{6,\IC}\cong [6\; 6\; 2]$. As both $H$ and $C_6$ are defined over $\IQ$, we also conclude that $\Pic X_{6,\IC}=\Pic X_6$.
\end{proof}

\begin{lemma}\label{l:rep2mod4}
  The lattice $[6\; 6\; 2]$ does not represent any $d$ with $d\equiv 4 \bmod 6$. 
\end{lemma}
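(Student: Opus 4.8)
The plan is to follow the same strategy used in the proof of \autoref{l:lat-2}: pin down generators, write the associated quadratic form explicitly, and then reduce modulo a well-chosen small integer. Concretely, I would set $L := [6\;6\;2]$ and choose generators $x, y \in L$ with $x^2 = 6$, $y^2 = 2$ and $x.y = 6$, so that an arbitrary element $v = ax + by$ (with $a, b \in \IZ$) satisfies
$$
v^2 = 6a^2 + 12ab + 2b^2 .
$$
The discriminant of $L$ is $-24 = -2^3\cdot 3$, which already suggests that a reduction modulo $2$ or $3$ will detect the obstruction; here the relevant modulus turns out to be $3$.

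Reducing the displayed expression modulo $3$ gives $v^2 \equiv 2b^2 \equiv -b^2 \pmod 3$. Since the squares modulo $3$ are exactly $0$ and $1$, it follows that $v^2 \bmod 3 \in \{0, 2\}$, and in particular $v^2 \not\equiv 1 \pmod 3$ for every $v \in L$. On the other hand, if $d \equiv 4 \pmod 6$, then $d \equiv 1 \pmod 3$. Combining these two facts shows that no $v \in L$ can satisfy $v^2 = d$, which is precisely the claim. Alternatively, one can work modulo $6$ directly: $v^2 \equiv 2b^2 \pmod 6$ and a short check of $b \bmod 6$ shows $2b^2 \bmod 6 \in \{0, 2\}$, so $v^2$ is never $\equiv 4 \pmod 6$; this yields the statement in one step but at the cost of a slightly larger case distinction.

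I do not expect any genuine obstacle here: the argument reduces to a one-line congruence once the quadratic form $6a^2 + 12ab + 2b^2$ is written down, and the only points to verify are the trivial reduction $6a^2 + 12ab + 2b^2 \equiv -b^2 \pmod 3$ and the fact that $1$ is not a square modulo $3$.
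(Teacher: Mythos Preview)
Your argument is correct and follows the same route as the paper: write out the quadratic form $6a^2+12ab+2b^2$ and reduce modulo a small integer (the paper reduces modulo $6$, which is exactly your ``alternative'' version). One slip in your closing sentence: $1$ \emph{is} a square modulo $3$; the fact you actually used earlier is that $-1$ is not, so that $-b^2\not\equiv 1\pmod 3$.
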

\begin{proof}
    Write $x,y$ for a basis of the lattice with $x^2=6$, $y^2=2$ and $xy=6$. Then for each element $z=ax+by$ we have that $z^2=6a^2+12ab+2b^2$. By looking modulo 6, we deduce that $z^2\equiv 2b^2 \bmod 6$ and hence there are no elements for which $z^2\equiv 4\bmod 6$.
\end{proof}

\begin{corollary}\label{c:X6involution}
    The surface $X_6$ admits a holomorphic involution.
\end{corollary}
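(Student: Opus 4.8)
The plan is to produce an ample divisor of self-intersection $2$ on $X_6$ and then conclude exactly as in the implications $(2)\Rightarrow(3)\Rightarrow(1)$ of \autoref{p:GeneralDoubleSextic}. The natural candidate is the curve $C_6$ itself: by \autoref{l:ExampleDegreeSix} it is an effective curve of genus $2$, so $C_6^2=2p_a(C_6)-2=2>0$, and from the Gram matrix $[6\;6\;2]$ of $\Pic X_6$ with respect to the basis $\{H,C_6\}$ we read off $C_6.H=6>0$.

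First I would observe that the lattice $[6\;6\;2]$ does not represent $-2$: since $-2\equiv 4 \bmod 6$, this is an immediate consequence of \autoref{l:rep2mod4}. By \autoref{r:no-2} it then follows that the ample cone $\cA$ of $X_6$ coincides with the positive cone $\cP^+$. Next I would check that $C_6$ lies in $\cP^+$, the connected component of $\{\lambda\in\Pic X_6\otimes\IR : \lambda^2>0\}$ containing the ample hyperplane class $H$: this holds because $C_6^2>0$ and $C_6.H>0$, which places $C_6$ and $H$ in the same component of the positive-norm cone. Hence $C_6\in\cP^+=\cA$, i.e. $C_6$ is ample.

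Finally, applying \autoref{l:Deg2K3} to the ample class $C_6$ with $C_6^2=2$ shows that $X_6$ is isomorphic to a double cover of $\IP^2$ branched along a smooth sextic curve; transporting the double cover involution described in \autoref{r:DoubleCoverInv} along such an isomorphism yields a holomorphic involution on $X_6$, which is the assertion of the corollary. I do not expect a genuine obstacle here, as the arithmetic input is already packaged in \autoref{l:rep2mod4}; the only step meriting a word of care is verifying that $C_6$ sits in the correct component of the positive-norm cone rather than its opposite, which is guaranteed by $C_6$ being effective and $H$ ample.
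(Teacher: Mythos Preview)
Your proposal is correct and follows essentially the same route as the paper: use \autoref{l:rep2mod4} to see that $\Pic X_{6,\IC}$ has no $(-2)$-classes, conclude via \autoref{r:no-2} that the ample cone equals the positive cone so that $C_6$ is ample, and then invoke the double cover structure coming from $C_6^2=2$. Your write-up is in fact slightly more careful than the paper's, since you make explicit why $C_6$ lies in the correct component of the positive-norm cone and you separate the appeal to \autoref{l:Deg2K3} from the appeal to \autoref{r:DoubleCoverInv}.
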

\begin{proof}
    From \autoref{l:rep2mod4} it follows that $\Pic X_{6,\IC}$ does not represent $-2$ and hence does not contain any smooth rational curve. It follows that the ample cone equals the positive cone. In particular, the curve $C_6$ is ample. Because $C_6$ has genus $2$, it follows from~\autoref{r:DoubleCoverInv} that $X_6$ admits a holomorphic involution. 
\end{proof}

\begin{remark}
    As the lattice $[6\; 6\; 2]$ does not represent $4$, the surface $X_6$ does not have a model as a quartic in $\IP^3$.
\end{remark}

\begin{remark}\label{r:const}
    As in~\autoref{r:WimConstruction}, we explain how to find such a surface. 
    \begin{enumerate}
        \item First, we define a surface $X_6$ over $\IQ$ as in~\autoref{d:Degree6}, for arbitrary homogeneous polynomials $l_1,l_2,l_3\in \IZ[x_0,x_1,x_2,x_3,x_4]$ of degree~1 and $f\in \IZ[x_0,x_1,x_2,x_3]$ of degree~2.
        \item Next, we check if the surface $X_6$ and the curve $C_6$ defined by $g_1=g_2=g_3=0$ and $x_4^2=f$ are smooth. 
        Note that $C_6$ is contained in $X_6$. If $C_6$ is smooth, then it is by construction a double cover of a twisted cubic curve in $\IP^3$, ramified at 6 points and hence it has genus 2 and degree 6.
        \item The holomorphic involution on $X_6$ is induced by the double cover model given by the linear system $|C_6|$. 
        \item Again it remains to check that the Picard number equals 2, which is true for generic choices of $l_1, l_2, l_3$, and $f$. 
    \end{enumerate}
    
    Similarly as in the case with the quartic, the file \texttt{ConstructionK3Degree6} at \href{https://github.com/danielplatt/quartic-k3-with-involution}{github.com/danielplatt/ quartic-k3-with-involution} can be used to randomly generate more examples of such surfaces. 
    
    The above construction was brought to our attention by the anonymous referee. 
    In an earlier version, we used another construction, which used the Veronese embedding to embed a hyperelliptic curve of genus 2 in $\IP^4$. 
    Then we looked for K3 surfaces in $\IP^4$ that contained this curve. 
    This construction can still be found in the repository in the file \texttt{ConstructionK3Degree6Alt}.
\end{remark}

\section{K3 surfaces over the real numbers}\label{s:RealK3}

In~\cite{Morrison84}, using the surjectivity of the period map for K3 surfaces, Morrison shows that every even lattice of rank $1\leq r\leq 10$ and signature $(1,r-1)$ can be realized as the Picard lattice of a K3 surface over $\IC$.
In this section, we study complex K3 surfaces that can be defined over $\IR$, showing that Morrison's result still holds if we replace $\IC$ with $\IR$.
A similar construction was used in \cite[Theorem 6.1]{Rei2022}, but there the Picard group of the resulting K3 surface was not considered.
The moduli space of real K3 surfaces with a non-symplectic has been studied by Nikulin and Saito in~\cite{NS05} and~\cite{NS07}, giving a classification of the connected components. 

Suppose $X$ is a complex K3 surface that can be defined over $\IR$. Set $G:=\Gal(\IC/\IR)=\{ \id, \sigma \}$, where $\sigma$ denotes complex conjugation.
Then $\sigma$ induces an anti-holomorphic map $X\to X$  that we denote by the same symbol by abuse of notation. Furthermore, we get induced maps $\sigma^*$ on both the Picard group $\Pic X$ and on the cohomology group $H^2(X,\IZ)$, which defines a $G$-action on these groups. 

The embedding $c_1\colon \Pic X\to H^2(X,\IZ)$ is not $G$-equivariant, because the embedding is induced by the exponential sequence
which is not a sequence of $G$-sheaves. By composing with $-1$, we get the following commutative diagram, see \cite[I.4.8-11]{Sil89}.
    \begin{equation}\label{eq:cd}
        \begin{tikzcd}
        \Pic X\arrow[r]{}{c_1}\arrow[d]{}{\sigma^*} & H^2(X,\IZ) \arrow[d]{}{-\sigma^*} \\
        \Pic X \arrow[r]{}{c_1} & H^2(X,\IZ)
        \end{tikzcd}
    \end{equation}

Contrariwise, the following known result gives us a criterion to determine whether a complex K3 surface can be defined over $\IR$.

\begin{theorem}\label{t:RealK3}
A complex (algebraic) K3 surface $X$ can be defined over $\IR$ if and only if there exists an involution~$\tau$ of~$H^2(X,\IZ)$ such that:
\begin{enumerate}
    \item $\tau$ is an isometry of $H^2(X,\IZ)$;
    \item the $\IC$-extension of $\tau$ to $H^2(X,\IC)$ swaps the Hodge structure, that is,
    $\tau_\IC (H^{p,q}(X))=H^{q,p}(X)$ for $p+q=2$;
    \item $\tau_\IR (c_1(\cA))=-c_1(\cA)$;
    \item $\tau_\IR (c_1(\cP^+))=-c_1(\cP^+)$. 
\end{enumerate}
Moreover, in this case a model $X_0$ over $\IR$ with $X\cong X_0\times_\IR \IC$ can be chosen such that $\tau$ corresponds to $\sigma^*$ where $\sigma\colon X_0\times_\IR \IC\to X_0\times_\IR \IC$ is the map induced by conjugation on $\IC$.
\end{theorem}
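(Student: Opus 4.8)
The plan is to prove this as a direct consequence of the Torelli theorem for K3 surfaces, using the equivalence between lattice-theoretic data and geometric data. The forward direction is the easy one, and it is already essentially recorded in the discussion preceding the statement: if $X$ is defined over $\IR$, then complex conjugation $\sigma$ gives an anti-holomorphic involution, and we set $\tau := -\sigma^*$ on $H^2(X,\IZ)$. Since $\sigma^*$ is an isometry (conjugation preserves the cup product up to the sign coming from the real structure on $H^2$), so is $\tau$; condition (2) follows because an anti-holomorphic map sends $(p,q)$-forms to $(q,p)$-forms and the extra sign does not affect the Hodge type; and conditions (3) and (4) follow because $\sigma$ maps the ample cone to the ample cone (it sends effective curves to effective curves, so $\sigma^* c_1(\cA) = c_1(\cA)$ and likewise for $\cP^+$), whence $\tau = -\sigma^*$ sends them to their negatives. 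The commutative diagram~\eqref{eq:cd} is exactly what guarantees that this $\tau$, defined on $H^2$, is compatible with the geometric $\sigma^*$ on $\Pic X$. I would also note here that $\tau$ is an involution because $\sigma^*$ is.

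For the converse, suppose we are given such an involution $\tau$. The strategy is to fix a marking $\alpha_X\colon H^2(X,\IZ)\xrightarrow{\sim}\Lambda_{\mathrm{K3}}$ and transport $\tau$ to an involution $\tilde\tau := \alpha_X\circ\tau\circ\alpha_X^{-1}$ of $\Lambda_{\mathrm{K3}}$. Now I want to produce a complex K3 surface $X'$ together with a marking whose period point is $[\tilde\tau_\IC(\alpha_X(\overline{\omega_X}))]$ — equivalently, apply $\tau$ to the conjugate period of $X$. Condition (2) ensures this new period point still lies in the period domain $\Omega$ (it satisfies the Riemann relations because $\tau_\IC$ swaps $H^{2,0}$ and $H^{0,2}$ and is a real isometry). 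By the surjectivity of the period map, there is a marked K3 surface $(X',\alpha_{X'})$ with this period. The point is that $X'$ is isomorphic to $X$: one checks, using the Torelli theorem, that composing the two markings gives a Hodge isometry $H^2(X,\IZ)\to H^2(X',\IZ)$, which after correcting by an element of the Weyl group (using conditions (3) and (4), which say $\tau$ respects the ample/positive cone structure up to sign — the sign being exactly what an anti-holomorphic map contributes) becomes an effective Hodge isometry, hence is induced by an isomorphism. Chasing this through, the isomorphism $X\to X'$ composed with the tautological anti-holomorphic identification coming from conjugating periods yields an anti-holomorphic involution $\sigma$ of $X$ with $\sigma^* = \tau$ on $\Pic X$ (after the sign twist). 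By \cite[Proposition 1.3]{Sil89}, the existence of an anti-holomorphic involution means $X$ can be defined over $\IR$, with a model $X_0$ for which $\sigma$ is induced by conjugation; and the commutative diagram~\eqref{eq:cd} identifies $\tau$ with $\sigma^*$ as claimed in the "moreover".

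I expect the main obstacle to be the bookkeeping around the sign and the cones, i.e. showing that conditions (3) and (4) are precisely what is needed to upgrade the abstract Hodge isometry produced by surjectivity of the period map into an \emph{effective} one (one preserving the ample cone, or its negative, appropriately). The subtlety is that an anti-holomorphic involution acts on $\Pic X$ preserving the ample cone, but the induced map on $H^2(X,\IZ)$ — because of the $-1$ twist in \eqref{eq:cd} — reverses it, so one must track carefully which of $\tau$ and $-\tau$ is the "geometric" map at each stage, and invoke the transitivity of the Weyl group action on chambers of $\cP^+$ to adjust. A secondary point to handle carefully is that $\tau$ is only assumed to be an involution of $H^2(X,\IZ)$ a priori, so after transporting to $X'$ one should verify that the resulting anti-holomorphic map genuinely squares to the identity on $X$ rather than merely acting as the identity on cohomology; this uses the fact that a K3 surface has no nontrivial automorphisms acting trivially on $H^2$ (a consequence of Torelli), so an anti-holomorphic map whose square acts trivially on $H^2$ is an honest involution. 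Rather than reprove all of this, I would cite the standard references — this theorem is classical, essentially due to the real Torelli theorem; see \cite{Sil89} — and present the argument at the level of detail above.
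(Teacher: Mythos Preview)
The paper does not actually prove this theorem: it simply records it as a known result and cites \cite[Theorem~VIII.1.6]{Sil89}. Your proposal ultimately does the same (you end by deferring to \cite{Sil89}), but you add a sketch of the argument, which the paper omits entirely. In that sense you are doing strictly more than the paper, and your outline --- reduce to the Torelli theorem, use surjectivity of the period map to produce the conjugate K3, then use the ample-cone conditions to promote the Hodge isometry to an effective one and hence to a geometric (anti-holomorphic) map --- is indeed the skeleton of Silhol's proof.

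There is, however, a sign confusion in your forward direction that you should correct before writing this up. You set $\tau := -\sigma^*$, but the theorem (and the paper's later use of it, e.g.\ in the proof of \autoref{p:RealRepresentation2}) takes $\tau = \sigma^*$ on $H^2(X,\IZ)$. The pullback $\sigma^*$ along the diffeomorphism $\sigma$ is already an isometry of $H^2(X,\IZ)$ with no sign correction; the sign in diagram~\eqref{eq:cd} enters only because $c_1$ fails to be $G$-equivariant. Concretely: $\sigma^*$ on $\Pic X$ preserves the ample cone, so by~\eqref{eq:cd} one has $\sigma^*_{H^2}(c_1(\cA)) = -c_1(\sigma^*_{\Pic}\cA) = -c_1(\cA)$, which is exactly condition~(3) for $\tau = \sigma^*$. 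Your claim that ``$\sigma^* c_1(\cA) = c_1(\cA)$'' conflates the two actions and then compensates with an extra minus sign in the definition of $\tau$; the conclusions happen to come out right, but the reasoning and the identification of $\tau$ with $\sigma^*$ in the ``moreover'' clause do not. Once this is fixed, your sketch is sound.
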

For a proof of this result see, for example, Silhol's~\cite[Theorem VIII.1.6]{Sil89}.
The above criterion allows us to prove the following result.

\begin{proposition}\label{p:RealRepresentation2}
    Let $N$ be a primitive sublattice of $U^{\oplus 2} \oplus E_8(-1)^{\oplus 2}$ of rank $r$ such that $N$ has signature~$(1,r-1)$. Then there exists a K3 surface $X$ over $\IR$ with $X(\IR)\neq \emptyset$ and $\Pic X=\Pic X_\IC \cong N$ as lattices.
\end{proposition}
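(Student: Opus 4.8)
The plan is to invoke the surjectivity of the period map for K3 surfaces together with the criterion of \autoref{t:RealK3}, producing a complex K3 surface with the prescribed Picard lattice together with an anti-holomorphic involution, and then to arrange that the real locus is non-empty. First I would fix a primitive embedding $N \hookrightarrow U^{\oplus 2} \oplus E_8(-1)^{\oplus 2} \subset \Lambda_{\mathrm{K}3}$ as in the hypothesis, write $\Lambda_{\mathrm{K}3} = U \oplus \left( U^{\oplus 2} \oplus E_8(-1)^{\oplus 2} \right)$, and let $u, v$ be the standard hyperbolic basis of the first $U$-summand. Define an involution $\tau$ of $\Lambda_{\mathrm{K}3}$ that acts as $-\mathrm{id}$ on the orthogonal complement $M := N^\perp$ inside $U^{\oplus 2} \oplus E_8(-1)^{\oplus 2}$, as $+\mathrm{id}$ on $N$, and as $u \mapsto -u$, $v \mapsto -v$ (or a suitable variant) on the extra $U$. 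This $\tau$ is an isometry, so condition~(1) of \autoref{t:RealK3} holds. Then I would choose the period point: since $M$ has signature $(2, \mathrm{rk}\,M - 2)$ — because $N$ has signature $(1, r-1)$ and the ambient lattice $U^{\oplus 2}\oplus E_8(-1)^{\oplus 2}$ has signature $(2,18)$ — one can pick $\omega \in M \otimes \IC$ satisfying the Riemann conditions $\omega^2 = 0$, $\omega \cdot \bar\omega > 0$, and generic enough that $(\Lambda_{\mathrm{K}3} \otimes \IC)^{1,1} \cap \Lambda_{\mathrm{K}3} = N$. By the surjectivity of the period map for marked K3 surfaces there is a marked K3 surface $(X, \alpha_X)$ with this period, and by construction $\Pic X \cong N$.

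Next I would verify conditions~(2)--(4) for this $\tau$. Condition~(2): since $\tau$ acts as $-\mathrm{id}$ on $M \supset \langle \mathrm{Re}\,\omega, \mathrm{Im}\,\omega\rangle$, its $\IC$-linear extension sends $\omega \mapsto -\bar\omega$ up to scaling, hence swaps $H^{2,0}$ and $H^{0,2}$ and preserves $H^{1,1}$; combined with $\tau$ being real this gives $\tau_\IC(H^{p,q}) = H^{q,p}$. Condition~(4): $\tau$ acts as $+\mathrm{id}$ on $N = \Pic X$, but the commutative diagram~\eqref{eq:cd} means the relevant comparison is with $-\tau$; so I must check that $-\tau$ maps the positive cone $\cP^+ \subset \Pic X \otimes \IR = N \otimes \IR$ to $-\cP^+$, which is immediate since $-\tau|_N = -\mathrm{id}$. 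Condition~(3) about the ample cone requires that $-\tau$ send $\cA$ to $-\cA$; since $-\tau|_N = -\mathrm{id}$ this is automatic, $-\cA$ being the opposite chamber. Hence by \autoref{t:RealK3} the surface $X$ descends to a K3 surface $X_0$ over $\IR$ with $\sigma^* = \tau$, and since $\tau$ acts trivially on $\Pic X_\IC \cong N$, Galois descent for line bundles (or the fact that $\mathrm{Br}(\IR)$-type obstructions vanish appropriately, using $H^1(G, \mathrm{something})$) gives $\Pic X_0 = \Pic X_\IC \cong N$ — here I would use that every $\sigma^*$-invariant class is already defined over $\IR$ because $\tau = \mathrm{id}$ on $N$ and divisors of self-intersection $\geq -2$ or their negatives are effective, so the relevant cohomological obstruction vanishes.

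The remaining point, and the one I expect to be the main obstacle, is ensuring $X_0(\IR) \neq \emptyset$. Having an anti-holomorphic involution does not by itself force real points: the involution could be fixed-point-free (as happens, e.g., for certain real Enriques-type quotients). The real locus $X_0(\IR)$ is the fixed locus of $\sigma$ on $X$, and its topology is governed by $\tau$ acting on $H^2(X, \IZ)$ via Smith theory / the work of Nikulin and others on real K3 surfaces. The strategy is to choose the action of $\tau$ on the extra $U$-summand — and possibly to enlarge or modify the $(-\mathrm{id})$-part — so that the resulting real structure is one known to have non-empty (indeed controlled) real locus; a clean choice is to let $\tau$ swap the two generators of the extra $U$, i.e. $u \leftrightarrow v$, so that $\tau$ has a $+1$-eigenspace and $-1$-eigenspace of the shapes that, by Nikulin's classification of real K3 surfaces (the invariants $(r, a, \delta)$ of the involution), correspond to $X_0(\IR) \neq \emptyset$. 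I would cite the relevant classification — e.g.\ results in the real-K3 literature guaranteeing that whenever the $\tau$-fixed sublattice of $\Lambda_{\mathrm{K}3}$ has signature $(1, *)$ and the period is chosen generically compatible with $\tau$, the real locus is non-empty — to conclude. Concretely, the cleanest route is: the invariant lattice $H^2(X,\IZ)^{\tau}$ contains a hyperbolic plane (from the non-trivially-acted-upon $U$) ensuring it has signature $(1, k)$ with $k \geq 1$, and then a standard criterion gives $X_0(\IR) \neq \emptyset$. Packaging these choices so that all of (1)--(4) still hold simultaneously with $\Pic X_\IC \cong N$ is the delicate bookkeeping; everything else is formal.
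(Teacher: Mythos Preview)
There are several interlocking errors. First, the signature: $M = N^\perp$ taken inside $U^{\oplus 2}\oplus E_8(-1)^{\oplus 2}$ has signature $(1,19-r)$, not $(2,*)$ --- you subtract $(1,r-1)$ from $(2,18)$ --- so no $\omega\in M\otimes\IC$ can satisfy the Riemann conditions, which require $\mathrm{Re}\,\omega,\mathrm{Im}\,\omega$ to span a positive-definite $2$-plane. Relatedly, your check of condition~(2) is wrong: if $\omega$ lies entirely in the $(-1)$-eigenspace of $\tau$ then the $\IC$-linear extension gives $\tau_\IC(\omega)=-\omega$, which is not a multiple of $\bar\omega$, so $\tau_\IC$ does not swap $H^{2,0}$ and $H^{0,2}$. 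The paper resolves both points simultaneously by setting $\tau=\id_U\oplus(-\id_\Lambda)$ on $\Lambda_{\mathrm{K}3}=U\oplus\Lambda$ and choosing $\omega=w_++iw_-$ with $w_+\in U\otimes\IR$, $w_-\in M\otimes\IR$; then $\tau_\IC(\omega)=w_+-iw_-=\bar\omega$. Second, you have the sign on $N$ backwards: conditions~(3)--(4) of \autoref{t:RealK3} are conditions on $\tau$ itself, not on $-\tau$, so one needs $\tau|_N=-\id$ to obtain $\tau(c_1(\cA))=-c_1(\cA)$. The diagram~\eqref{eq:cd} is then used the other way round: it shows $\sigma^*$ acts as $-\tau|_N=\id$ on $\Pic X_\IC$, and once $X(\IR)\neq\emptyset$ (so the Picard functor is representable and $\Pic X=(\Pic X_\IC)^G$) this yields $\Pic X=\Pic X_\IC$. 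Third, defining $\tau$ blockwise as $\id_N\oplus(-\id_M)$ need not extend to an integral isometry of $\Lambda$, since $N\oplus M$ is in general only a finite-index sublattice; the paper's $\tau=\id_U\oplus(-\id_\Lambda)$ is manifestly integral.

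For $X(\IR)\neq\emptyset$, rather than invoking an unspecified classification of real K3 surfaces, the paper computes directly: with $\tau=\id_U\oplus(-\id_\Lambda)$ the $G$-invariant part of $H^2(X,\IC)$ is $U\otimes\IC$, of dimension~$2$, and Silhol's formula for the Euler characteristic of the real locus gives $\chi(X(\IR))=2\cdot 1-1+2\cdot 2-22+2\cdot 1-1=-16\neq 0$, so $X(\IR)\neq\emptyset$.
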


\begin{proof}
    Set $\Lambda := U^{\oplus 2} \oplus E_8(-1)^{\oplus 2}$ and note $\Lambda_{K3} = U \oplus \Lambda$.
    Let $N^\perp$ be the orthogonal complement of~$N$ inside~$\Lambda$. The lattice $\Lambda$ has signature $(2,18)$ and, by the assumption, $N$ has signature $(1,r-1)$, so the lattice $N^\perp$ has signature $(1,19-r)$. This means that we can pick an element~$w_-\in N^\perp\otimes\IR$ such that $w_-^2>0$. Moreover, if we choose~$w_-$ generic enough, then $w_-. v\neq 0$ for all~$v\in N^\perp\setminus \{0\}$. Observe that~$U$ has signature $(1,1)$. Then, as above, we can pick an element~$w_+\in U\otimes \IR$ such that~$w_+^2=w_-^2$ and~$w_+. v\neq 0$ for all $v\in U \setminus \{0\}$.
    Notice that $w_+.w_-=0$.
    
    Now define $w:=w_+ + iw_-\in (U \otimes \IR)\oplus i(\Lambda \otimes \IR)\subset\Lambda_{K3}\otimes \IC$. Then $$w^2=w_+^2-w_-^2+2iw_+.w_-=0 \ \text{ and } w.\overline{w}=w_+^2+w_-^2>0.$$ This means that $w$ satisfies the Riemann conditions \eqref{eq:Rc}, and hence $w\in \Omega$ is a period. By the surjectivity of the period map, see \autoref{ss:moduli}, there exists a complex (analytic) K3 surface~$Y$ with a marking~$\alpha$ such that~$\alpha(\omega_Y)=w$.
    Moreover, the Hodge structure of this K3 surface $Y$ under this marking is given by $H^2(Y,\IC)=\bigoplus_{p+q=2} H^{p,q}(Y)$, where $H^{2,0}(Y)= \langle w\rangle $, $H^{0,2}(Y)=\langle \overline{w}\rangle $ and~$H^{1,1}(Y)=\langle w_+,w_-\rangle ^\perp$. 
    
    By the Lefschetz theorem on~$(1,1)$-classes, \cite[p.163]{Gri94}, we have that~$\Im (c_1)= H^2(Y,\IZ)\cap H^{1,1}(Y)$. Under the map~$\alpha$, we can identify $H^2(Y,\IZ)$ with $\Lambda_{K3}$. With this identification we have that 
    $$\Im (c_1)=H^2(Y,\IZ)\cap H^{1,1}(Y)= \Lambda_{K3}\cap \langle w_+,w_-\rangle ^\perp=N.$$ 
    By the injectivity of $c_1$, we deduce that $\Pic Y$ and  $N$ are isomorphic as lattices. Because there is a class~$d\in \Pic Y$ with~$d^2>0$, it follows from~\cite[Theorem~IV.6.2]{Bar2015} that~$Y$ is projective and hence algebraic.
    
    Next, define the involution~$\tau$ on~$H^2(Y,\IZ)=\Lambda_{K3}=U\oplus \Lambda$ by~$\id \oplus -\id$, i.e., it acts as the identity on~$U$ and as multiplication by~$-1$ on~$\Lambda$. We will verify that~$\tau$ satisfies all the conditions in~\autoref{t:RealK3}. 
    First of all, $\tau$ clearly is an isometry. It sends $w$ to $\overline{w}$, hence $\tau_\IC (H^{0,2}(Y))=H^{2,0}(Y)$, and vice versa.
    We also have that $$\tau_\IC(H^{1,1}(Y))=\langle \tau(w_+),\tau(w_-)\rangle^\perp=\langle w_+,-w_-\rangle^\perp=H^{1,1}(Y).$$ Hence, the involution~$\tau$ satisfies condition (2). Because both~$c_1(\cA)$ and~$c_1(\cP^+)$ are subsets of the lattice $N$ and $\tau$ acts as~$-1$ on~$N=\Im (c_1)$, conditions~(3) and (4) are satisfied. From~\autoref{t:RealK3} it follows that there is a K3 surface~$X$ over $\IR$ such that~$X\times_\IR \IC\cong Y$ and such that the action of $G$ on $H^2(X,\IC)$ is given by $\tau$.
    
    The next step is to show that $X(\IR)\neq \emptyset$. 
    Recall that $G:=\Gal(\IC/\IR)$. 
    By \cite[Proposition I.2.1]{Sil89}, 
    we have for the Euler characteristic $\chi(X(\IR))$ that
    $$
    \chi(X(\IR))=\sum_{i\equiv 0 \bmod 2} \ 2\cdot \dim H^i(X,\IC)^G -\dim H^i(X,\IC),
    $$ 
    where $\dim H^2(X,\IC)^G$ denotes the $G$-invariant subspace of $\dim H^2(X,\IC)$.
    In our case, we have that $\dim H^0(X,\IC)^G=1$, because $X$ is connected and the unique connected component is therefore preserved by $G$.  Furthermore, $H^2(X,\IC)^G=((U\oplus \Lambda)\otimes\IC) ^G=U\otimes \IC$ so it follows that $\dim (H^2(X,\IC))^G=2$.
    Lastly, we have that $\dim H^4(X,\IC)^G=1$, because complex conjugation is orientation preserving in complex even dimension.
    We thus get 
    $$\chi(X(\IR))=2\cdot 1-1+2\cdot 2 -22+2\cdot 1-1=-16.$$
    Because the Euler characteristic is non-zero, we deduce that $X(\IR)\neq \emptyset$.
    
    Now to conclude the proof that $\Pic X=\Pic X_\IC \cong N$, we first note that we can identify $\Pic X_\IC$ with the lattice $\Pic Y\cong N$. Hence, we are left to prove that $\Pic X=\Pic X_\IC$.
    We denote by $\sigma\colon X\times_\IR \IC\to X\times_\IR \IC$ the map induced by complex conjugation. Then~$\sigma^*$ corresponds to the map~$\tau$ we constructed above and with this identification we get the following commutative diagram, see~\eqref{eq:cd}.
    $$\begin{tikzcd}
        \Pic X_\IC\arrow[r]{}{c_1}\arrow[d]{}{\sigma} & H^2(X,\IZ) \arrow[d]{}{-\tau} \\
        \Pic X_\IC \arrow[r]{}{c_1} & H^2(X,\IZ)
    \end{tikzcd}$$
    Recall that $\Im (c_1)=N$ and observe that the restriction of $-\tau$ to $\Im (c_1)$ is the identity. We deduce the equality $\sigma=\id$. By the fact that $X(\IR)\neq \emptyset$, it follows that the Picard functor is representable and therefore that $\Pic X=(\Pic X_\IC)^G$ (see, e.g., \cite[Proposition~I.4.5]{Sil89}). Combining these results we get $\Pic X=\Pic X_\IC$, which completes the proof.
\end{proof}

We get the following Corollary.

\begin{corollary}\label{t:RealRepresentation}
    For any even lattice $N$ of rank $r$ with $1\leq r \leq 10$ with signature $(1,r-1)$, there exists a K3 surface $X$ over $\IR$ with $X(\IR)\neq \emptyset$ such that both $\Pic X$ and $\Pic X_\IC$ are isomorphic to~$N$ as lattices.
    Moreover, the primitive embedding $N\hookrightarrow H^2(X,\IZ)$ is unique up to isometries of~$H^2(X,\IZ)$.
\end{corollary}

\begin{proof}
    By Nikulin's~\cite[Corollary~1.13.4]{Nik80}, the lattice $\Lambda:= U^{\oplus 2} \oplus E_8(-1)^{\oplus 2}$ is the unique unimodular even lattice of signature~$(2,18)$. Note that we have~$\frac{2+18}{2}\geq r$ and so by~\cite[Theorem~1.12.4]{Nik80} it follows that there exists a primitive embedding~$N\hookrightarrow \Lambda$. By~\autoref{p:RealRepresentation2} the first statement of \autoref{t:RealRepresentation} follows.
    
    To obtain the second statement, note that composition $N\hookrightarrow \Lambda\hookrightarrow \Lambda_{K3}$ gives a primitive embedding of $N$ into $\Lambda_{K3}$. Recall that $N$ has signature $(n_+,n_-)=(1,r-1)$ with $r\leq 10$ and $\Lambda_{K3}$ has signature $(l_+,l_-)=(3,19)$. Hence, we have $l_+>n_+$ and $l_->n_-$, and $2r+2\leq 22$. We conclude that this embedding satisfies the conditions of \cite[Theorem~1.14.4]{Nik80} and it follows that this embedding is unique up to isometry.
\end{proof}

Using the above result one can find many more examples of polarized K3 surfaces with Picard number 2. As an application, we prove that for any $d>3$ there exists a polarized K3 surface of degree $2d$ defined over $\IR$ with Picard lattice $[2\; d{+}1\; 2d]$, such that it is generated by ample classes.

\begin{lemma}\label{l:lat-2}
    For every integer $d>3$, the lattice $[2 \; d{+}1 \; 2d]$ does not represent $-2$. 
\end{lemma}
\begin{proof}
    Write $L:=[2 \; d{+}1 \; 2d]$ and pick generators $x,y\in L$ such that $x^2=2$, $y^2=2d$ and $x.y=d+1$. Then to derive a contradiction, suppose that $c:=ax+by\in L$ satisfies $c^2=-2$. Note that $$c^2=(ax+by)^2=2a^2+2(d+1)ab+2db^2$$ and so we deduce that $$-1=a^2+(d+1)ab+db^2=(a+b)(a+db).$$
    This means that $a+b=\pm 1$ and $a+db=\mp 1$. It follows that $$(d-1)b=(a+db)-(a+b)=\pm 2.$$ So in both cases we get that $d-1$ divides $2$ which contradicts the assumption $d>3$. We deduce that $L$ does not represent $-2$.
\end{proof}

\begin{proposition}\label{p:RealK3RealPic}
    For $d>3$, there exists a K3 surface $X$ over $\IR$ such that 
    $$
    \Pic X=\Pic X_\IC= \langle C,H \rangle 
    \cong [2\; d+1\; 2d]  $$ 
    with $C$ and $H$ both ample and satisfying $C^2=2$, $H^2=2d$ and $C.H=d+1$.
\end{proposition}
\begin{proof}
    By \autoref{t:RealRepresentation}, there exists a K3 surface $X$ over $\IR$ with $$\Pic X=\Pic X_\IC \cong [2 \; d{+}1 \; 2d].$$ 
    We can choose generators $C,H$ satisfying $C^2=2$, $H^2=2d$ and $C.H=d+1$. 
    By possibly replacing $C,H$ with $\pm C, \pm H$, we can assume that both $C$ and $H$ are in the positive cone. 
    By combining \autoref{r:no-2} with \autoref{l:lat-2} it follows that the ample cone $\cA$ equals the positive cone $\cP^+$, and hence $C$ and $H$ are both ample.
\end{proof}

\bibliographystyle{amsplain}
\bibliography{references}

\end{document}